\documentclass[10pt,a4paper,twoside,reqno]{amsart}

\usepackage[a4paper,%bindingoffset=0.2in,
left=1in,right=1in,top=1in,bottom=1.2in,%
footskip=.25in]{geometry}

%\addtolength{\oddsidemargin}{-.875in}
%\addtolength{\evensidemargin}{-.875in}
%\addtolength{\textwidth}{1.75in}

%\addtolength{\topmargin}{-.875in}
%\addtolength{\textheight}{1.75in}

\usepackage{upgreek}
\usepackage{bm}
\usepackage{latexsym, amsmath, amstext, amssymb, amsfonts, amscd, bm, array, multirow, amsbsy, mathrsfs}
\usepackage{amsthm}
\usepackage{t1enc}
\usepackage[mathscr]{eucal}
\usepackage{indentfirst}
\usepackage{pb-diagram}
\usepackage{graphicx}
\usepackage{fancyhdr}
\usepackage{fancybox}
\usepackage{enumerate}
\usepackage{color}
\usepackage{tikz-cd}
\usepackage[all]{xy}
\usepackage{hyperref}
\usepackage{tikz}
\usepackage{xparse}
\hypersetup{colorlinks=false,pdfborderstyle={/S/U/W 0}}
\usetikzlibrary{matrix}

\newcommand{\arxiv}[1]{{\tt
		\href{http://www.arXiv.org/abs/#1}{arXiv:#1}}}

\usepackage{url}
\usepackage[sort&compress,comma]{natbib}
\bibpunct{[}{]}{,}{n}{}{,}
\theoremstyle{plain}
\newtheorem{thm}{Theorem}[section]

\newtheorem{prop}[thm]{Proposition}

\newtheorem{lemma}[thm]{Lemma}
\newtheorem{cor}[thm]{Corollary}

\theoremstyle{definition}
\newtheorem{definition}[thm]{Definition}

\theoremstyle{remark}
\newtheorem{remark}[thm]{Remark}

\newtheorem*{ack}{Acknowledgements}

\def\ad{\mathrm{ad}}

\newcommand{\bDelta}{{\boldsymbol{\Delta}}}
\newcommand{\bXi}{{\boldsymbol{\Xi}}}
\newcommand{\bPhi}{\boldsymbol{\Phi}}

\newcommand{\End}{\mathrm{End}}
\newcommand{\Aut}{\mathrm{Aut}}

\newcommand{\eqdef}{\stackrel{{\rm def.}}{=}}

\newcommand{\frt}{\mathfrak{t}}
\def\fS{\mathfrak{S}}

\DeclareFontFamily{U}{rsf}{}
\DeclareFontShape{U}{rsf}{m}{n}{<5> <6> rsfs5 <7> <8> <9> rsfs7 <10-> rsfs10}{}
\DeclareMathAlphabet\Scr{U}{rsf}{m}{n}

\def\Z{\mathbb{Z}}

\def\R{\mathbb{R}}

\def\dd{\mathrm{d}}

\def\l\Xi{\overrightarrow{\Xi}}
\def\r\Xi{\overleftarrow{\Xi}}

\def\l{\partial^l}

\def\ad{\mathrm{ad}}

\def\pr{\mathrm{pr}}

\def\Diff{\mathrm{Diff}}
\def\Conf{\mathrm{Conf}}
\def\Sol{\mathrm{Sol}}
\def\Aff{\mathrm{Aff}}
\def\aff{\mathrm{aff}}

\def\Conn{\mathrm{Conn}}
\def\bzeta{\boldsymbol{\upzeta}}
\def\cConf{\mathfrak{Conf}}
\def\cSol{\mathfrak{Sol}}

\newcommand{\be}{\begin{equation*}}
\newcommand{\ee}{\end{equation*}}
\newcommand{\ben}{\begin{equation}}
\newcommand{\een}{\end{equation}}
\newcommand{\beqa}{\begin{eqnarray*}}
	\newcommand{\eeqa}{\end{eqnarray*}}
\newcommand{\beqan}{\begin{eqnarray}}
\newcommand{\eeqan}{\end{eqnarray}}

\newcommand{\id}{\mathrm{id}}

\def\cC{{\mathcal C}}

\def\cB{\Scr B}

\def\cH{\mathcal{H}}

\def\U{\mathrm{U}}

\def\cD{\mathcal{D}}
\def\cJ{\mathcal{J}}

\def\cA{\mathcal{A}}

\def\cP{\mathcal{P}}

\def\cG{\mathcal{G}}
\def\cT{\mathcal{T}}
\def\cF{\mathcal{F}}
\def\cC{\mathcal{C}}

\def\Sp{\mathrm{Sp}}
\def\G_2{\mathrm{G_2}}

\def\cO{\mathcal{O}}
\def\cL{\mathcal{L}}
\def\cS{\mathcal{S}}

\def\cV{\mathcal{V}}

\def\cX{\mathcal{X}}

\newcommand{\Hom}{{\rm Hom}}

\newcommand{\Iso}{{\rm Iso}}

\def\Aut{\mathrm{Aut}}

\def\frV{\mathfrak{V}}

\def\G{\mathrm{G}}

\def\R{\mathbb{R}}

\def\cL{\mathcal{L}}
\def\cH{\mathcal{H}}
\def\dd{\mathrm{d}}

\def\frad{\mathfrak{ad}}
\def\cM{\mathcal{M}}
\def\mC{\mathrm{C}}
\def\cl{\mathrm{cl}}
\def\dcl{{\dd_{\cD^s}\!\mbox{-}\cl}}
\def\Div{\mathrm{Div}}

\def\Lor{\mathrm{Lor}}

\usepackage{enumitem}
\setlist[itemize]{leftmargin=*}

\newcolumntype{P}[1]{>{\centering\arraybackslash}p{#1}}

%\usepackage[math]{anttor}
%\usepackage[T1]{fontenc}
 
%%%%%%%%%%%%%%%%%%%%%%%%%%%%%%%%%%%%%%%%%%%%%%%%%%%%%%%%%%%%%%%%%%%%%%%%%%%%

\begin{document}

\title[The geometry of four-dimensional supergravity]{The geometry and DSZ quantization four-dimensional supergravity}

 \author[C. Lazaroiu]{C. Lazaroiu} \address{Department of Theoretical
	Physics, Horia Hulubei National Institute for Physics and Nuclear
	Engineering, Bucharest-Magurele, Romania}
\email{lcalin@theory.nipne.ro}

\author[C. S. Shahbazi]{C. S. Shahbazi} 
\address{Departamento de Matem\'aticas, UNED Madrid, Reino de Espa\~na}
\email{cshahbazi@mat.uned.es}
\address{Fakult\"at Mathematik, Hamburg Universit\"at, Bundesrepublik Deutschland}
\email{carlos.shahbazi@uni-hamburg.de}

\thanks{2010 MSC. Primary: 53C80. Secondary: 83E50.}
\keywords{Mathematical supergravity, abelian gauge theory, electromagnetic duality, symplectic vector bundles}

\begin{abstract}
We implement the Dirac-Schwinger-Zwanziger integrality condition on four-dimensional  
classical ungauged supergravity and use it to obtain its duality-covariant, 
gauge-theoretic, differential-geometric model on an oriented four-manifold $M$ of 
arbitrary topology. Classical bosonic supergravity is completely determined by a 
submersion $\pi$ over $M$ equipped with a complete Ehresmann connection, a 
vertical euclidean metric and a vertically-polarized flat symplectic vector bundle 
$\Xi$. Building on these structures, we implement the Dirac-Schwinger-Zwanziger integrality  
condition through the choice of an element in the degree-two sheaf cohomology group with 
coefficients in a locally constant sheaf $\cL\subset \Xi$  valued in the groupoid of integral 
symplectic spaces. We show that this data determines a Siegel principal bundle $P_{\frt}$ of fixed type 
$\frt\in \mathbb{Z}^{n_v}$ whose connections provide the global geometric description of 
the local electromagnetic gauge potentials of the theory. Furthermore, we prove that 
the Maxwell gauge equations of the theory reduce to the polarized  self-duality 
condition determined by $\Xi$ on the connections of $P_{\frt}$. In addition, we investigate the 
continuous and discrete U-duality groups of the theory, characterizing them through short 
exact sequences and realizing the latter through the gauge group of $P_{\frt}$ acting on its 
adjoint bundle. This elucidates the geometric origin of U-duality, which we 
explore in  several examples, illustrating its dependence on the topology 
of the fiber bundles $\pi$ and $P_{\frt}$ as well as on the isomorphism 
type of $\cL$.
\end{abstract}
 
\maketitle

\setcounter{tocdepth}{1} %doesn't display subsections in TOC 
%\tableofcontents

% % % % % % % % % % % % % % % % % % % % % % % % % % % % % % % % % % % % % % 
% % % % % % % % % % % % % % % % % % % % % % % % % % % % % % % % % % % % % % 

\section{Introduction}

% % % % % % % % % % % % % % % % % % % % % % % % % % % % % % % % % % % % % % 
% % % % % % % % % % % % % % % % % % % % % % % % % % % % % % % % % % % % % % 

The main goal of this article is to construct the gauge-theoretic and 
duality-covariant global differential geometric model of the universal bosonic 
sector of four-dimensional ungauged  supergravity and study its U-duality 
group. In order to do so, we will implement the Dirac-Schwinger-Zwanziger (DSZ) 
integrality condition \cite{Dirac:1931kp,Schwinger:1966nj,Zwanziger:1968rs} 
on the gauge sector of the classical theory and we will interpret the result 
geometrically. This procedure is conceptually analogous to the implementation of
the Dirac integrality condition on the \emph{field strengths} of classical 
Maxwell theory in order to identify the notion of \emph{gauge field} with the 
notion of connection on a principal $\U(1)$ bundle, only that implemented in 
a remarkably more complex theory that requires more sophisticated sheaf cohomology 
groups. Although this type of schemes are usually referred to as \emph{DSZ quantization
conditions} in the literature, we generally prefer the term \emph{DSZ integrality
condition} since \emph{quantization} has a very different meaning in 
of quantum field theory. In the following we will use both terms interchangeably.

The {\em local} formulation of classical four-dimensional supergravity
theories has been studied intensively in the physics literature, see for
instance the seminal references \cite{Andrianopoli:1996cm,Andrianopoli:1996ve,Ceresole:1995ca,Ceresole:1995jg,Cremmer:1982en,Cremmer:1982wb,deWit:1984rvr,deWit:1984wbb} and the reviews and books \cite{Aschieri:2008ns,FreedmanProeyen,Gallerati:2016oyo,LopesCardoso:2019mlj,Ortin}. Such theories share a {\em universal bosonic sector}, which is subject 
to increasingly stringent constraints according to the number of
supersymmetry generators of the theory. The global classical  geometric 
formulation of the universal bosonic sector of supergravity was obtained in 
\cite{Lazaroiu:2016iav,Lazaroiu:2017qyr}, see also \cite{Liu:2020kds} for a 
mathematically rigorous approach to supergravity based on supergeometry. 
In \cite{Lazaroiu:2016spz,Lazaroiu:2016iav,Lazaroiu:2017qyr} it was found that 
the local structure of supergravity does not suffice to determine the theory on 
spacetimes which are not simply-connected. As show in loc. cit., the global 
formulation of the classical universal bosonic sector of ungauged 
supergravity on an oriented four-manifold  $M$ is a {\em Generalized 
Einstein-Section-Maxwell theory}, which is determined by the following data:
\begin{itemize}
\item A \emph{flat scalar bundle} $(\pi,\cH,\cG)$, where $\pi \colon X\to
M$ is a submersion equipped with a complete flat Ehresmann connection
$\cH$ and a Euclidean vertical metric $\cG$, i.e. a Euclidean metric
on the vertical bundle of $\pi$ which is invariant under the parallel
transport of $\cH$.
\item A \emph{duality bundle} $\Delta=(\cS,\omega,\cD)$ i.e. a flat
symplectic vector bundle over the total space $X$ of $\pi$.
\item A vertical polarization $\cJ$ on $\Delta$, i.e. a taming of
$(\cS,\omega)$ which is invariant under the extended parallel transport 
induced by $\cH$ and $\cD$.
\end{itemize}
The configuration space of the bosonic supergravity theory determined by 
a tuple $(\pi,\cH,\cG,\Delta,\cJ)$ as introduced above is the set of 
triples $(g,s,\cF)$, where $g$ is a Lorentzian metric on $M$, $s$ is a global 
section of $\pi$ and $\cF\in \Omega^2(M, \cS^s)$ is a two-form on $M$ taking 
values in the pull-back $\cS^s$ of $\cS$ by $s$ which is covariantly closed with 
respect to the pullback connection $\cD^s$. The classical equations of motion of the 
theory were given naturally in terms of aforementioned geometric structures as explained
in \cite{Lazaroiu:2017qyr}. In particular, the Maxwell equations (i.e. the equations 
of motion for $\cF$) correspond to the polarized self-duality condition determined 
by the taming $\cJ$.

The description of the gauge sector in terms of field strengths $\cF$ is
unsatisfactory when coupling the theory to quantized charged
particles. Indeed, the Aharonov-Bohm effect \cite{Aharonov:1959fk}
implies that this sector should admit a global description in terms of
{\em gauge potentials}, which are expected to be modeled by connections $\cA$ 
on an appropriate principal bundle $P$ defined on $M$. To determine this
bundle, we implement the Dirac-Schwinger-Zwanziger (DSZ) integrality condition 
on the gauge sector. We then show that this condition implies that $P$ is a 
Siegel bundle in the sense of \cite{LazaroiuShahbaziAGT}, i.e. a principal 
bundle whose structure group is the automorphism group of an integral 
symplectic affine torus. The latter is isomorphic to a certain semidirect product 
of an even-dimensional torus group with a modified Siegel modular group. 
This process parallels the DSZ quantization of Abelian gauge theories
with manifest electromagnetic duality, developed in \cite{LazaroiuShahbaziAGT},  
which depends on a {\em Siegel system} $Z$ on $X$. The latter was defined 
in \cite{LazaroiuShahbaziAGT} as a local system of finitely-generated free 
Abelian groups whose structure group reduces to a modified 
Siegel modular group and which is isomorphic to $\Delta$ upon tensorization 
with $\R$ over $\Z$. We define a classical configuration $(g,s,\cF)$ to be 
{\em integral} if the cohomology class of $\cF$ with respect to the de Rham 
differential twisted by $\cD^s$ belongs to the integral lattice defined by the 
second cohomology group of $M$ with coefficients in $Z^s$, the pull-back of $Z$ 
by $s$. By the results of \cite{LazaroiuShahbaziAGT}, any element of $H^2(M,Z^s)$ 
is the {\em twisted Chern class} of a Siegel bundle defined on the total space of 
$\pi$. Using this fact, we show that the DSZ quantization of classical bosonic 
supergravity is determined by the following data:
\begin{itemize}
\item A \emph{flat scalar bundle} $(\pi,\cH,\cG)$.
\item A Siegel bundle $P_{\frt}$ of type $\frt$ defined on the total space 
$X$ of $\pi$.
\item A vertical polarization $\cJ$ on the adjoint bundle $\ad(P_{\frt})$ of 
$P_{\frt}$.
\end{itemize}

\noindent
The configuration space of the DSZ quantization of bosonic supergravity 
determined by $(\pi,\cH,\cG)$ and $(P_{\frt},\cJ)$ is given by triples $(g,s,\cA)$, 
where $g$ and $s$ are as defined above and $\cA$ is a connection on $P^s_{\frt}$, 
which describes both the electric and magnetic potentials of the theory. The
Maxwell equations become a first-order condition on the connections of
$P_{\frt}$ which depends on the Hodge operator of $g$ and the polarization
$\cJ$. In particular, this gives the global and duality-covariant equations
of motion of the universal bosonic sector of four-dimensional ungauged
supergravity on $(P_{\frt},\cJ)$ in terms of the variables $(g,s,\cA)$.

Using this geometric and gauge-theoretic formulation of bosonic supergravity, 
we study its group of continuous and discrete U-duality transformations, which 
we characterize through short exact sequences involving the group of automorphisms 
of $P_{\frt}$ and its adjoint bundle. In general, these groups can differ markedly 
from their local counterparts considered in the physics literature \cite{Hull:1994ys}. 
In this regard, we emphasize the dependence of U-duality groups on the 
\emph{type}\footnote{This is a classical notion in the theory of symplectic 
lattices see for instance \cite{Debarre} or \cite[Appendix B]{LazaroiuShahbaziAGT} 
for more details.} of the corresponding Siegel modular group and of the
isomorphism class of the Siegel system $Z$, a point which does not
appear to have been noticed in the supergravity literature. In particular,
the explicit computation of the discrete U-duality groups becomes a hard
arithmetic problem in the theory of automorphisms of local systems. As an
application of the framework that we develop, we show that the group of discrete 
electromagnetic duality transformations of a four-dimensional supergravity is 
the \emph{discrete remnant} of the unbased group of automorphisms of $P_{\frt}$. 
This clarifies the geometric origin of U-duality in supergravity in terms of a 
particular class of \emph{gauge transformatons} of $\cP_{\frt}$.

The geometric formulation described in this paper provides the basis of the
mathematical framework necessary to investigate the differential-geometric 
problems arising in four-dimensional supergravity. In particular, it allows for 
a mathematically rigorous formulation of the geometric constraints imposed by 
supersymmetry through the corresponding Killing spinor equations, in the spirit 
of \cite{Cortes:2018lan}, thus opening the way for developing the mathematical 
theory of supergravity supersymmetric solutions and moduli spaces of such.

% % % % % % % % % % % % % % % % % % % % % % % % % % % % % % % % % % % % % % 
% % % % % % % % % % % % % % % % % % % % % % % % % % % % % % % % % % % % % % 

\begin{ack}
We thank Vicente Cort\'es and Tom\'as Ort\'in for useful comments and discussions. The work of C. I. L. was supported by grant IBS-R003-S1. The work of C.S.S. is supported by the Germany Excellence Strategy \emph{Quantum Universe} - 390833306 and the 2022 Leonardo Grant for Researchers and Cultural Creators, BBVA Foundation.
\end{ack}

% % % % % % % % % % % % % % % % % % % % % % % % % % % % % % % % % % % % % % 
% % % % % % % % % % % % % % % % % % % % % % % % % % % % % % % % % % % % % % 

% % % % % % % % % % % % % % % % % % % % % % % % % % % % % % % % % % % % % % 
% % % % % % % % % % % % % % % % % % % % % % % % % % % % % % % % % % % % % % 

\section{Classical bosonic supergravity}
\label{sec:geometricsugra}

% % % % % % % % % % % % % % % % % % % % % % % % % % % % % % % % % % % % % % 
% % % % % % % % % % % % % % % % % % % % % % % % % % % % % % % % % % % % % %

In this section we recall the construction of generalized Einstein-Section-Maxwell 
theories on an oriented four-manifold $M$ given in
\cite{Lazaroiu:2016iav,Lazaroiu:2017qyr}. These give the global
differential-geometric model of the universal bosonic sector of
four-dimensional supergravity (also called \emph{classical
geometric bosonic supergravity} or classical bosonic supergravity 
for short, see \cite{Cortes:2018lan}).

% % % % % % % % % % % % % % % % % % % % % % % % % % % % % % % % % % % % % %

\subsection{Preparations}

% % % % % % % % % % % % % % % % % % % % % % % % % % % % % % % % % % % % % %

Let $M$ be an oriented and connected four-manifold. We start by
introducing the geometric data needed to formulate classical 
bosonic supergravity on $M$, a detailed account of which was given 
in \cite{Lazaroiu:2017qyr}.

\begin{definition} A \emph{scalar bundle} of rank $n_s$ on $M$ is a
triple $(\pi,\cH,\cG)$ consisting of:
\begin{itemize}
\item A smooth submersion $\pi\colon X\to M$, where $X$ is a
connected and oriented differentiable manifold of dimension $n_s + 4$.
\item A complete Ehresmann connection $\cH \subset TX$ on
$\pi$.
\item A {\em vertical Euclidean metric}, i.e. a Euclidean metric $\cG$
defined on the vertical bundle $\cV\subset TX$ of $\pi$ which is
preserved by the parallel transport of $\cH$. Recall that the vertical
bundle $\cV\subset TX$ is defined as the kernel of the differential
map $\dd\pi\colon TX\to TM$.
\end{itemize}

\noindent We say that a scalar bundle $(\pi,\cH,\cG)$ is \emph{flat}
if $\cH$ is Frobenius integrable.
\end{definition}

\noindent In the following we denote by $\cO_m$ the restriction of any 
geometric structure $\cO$ defined on $X$ to the fiber $X_m$ of $\pi$ at 
$m\in M$.

\begin{remark} Since the Ehresmann connection $\cH$ of a scalar bundle
is complete and its parallel transport preserves the vertical metric
$\cG$, the fibers $(X_m,\cG_m)$ of $\pi$ are isomorphic to each other
as Riemannian manifolds. By the results of \cite{Ehresmann}, it
follows that $\pi$ is a fiber bundle associated to a principal bundle
with structure group given by the isometry group of $(X_m,\cG_m)$. 
Notice that $X_m$ is typically non-compact in physics applications. 
\end{remark}

\begin{definition} A scalar bundle $(\pi,\cH,\cG)$ is:
\begin{itemize}
\item \emph{topologically trivial} if $\pi$ is topologically trivial
as fiber bundle, i.e. $X$ is diffeomorphic with $M\times \cM$ for some
manifold $\cM$ and $\pi$ identifies with the projection $\pr:M\times
\cM\rightarrow M$ on the first factor.
\item \emph{holonomy-trivial} if the holonomy of $\cH$ is trivial.
\end{itemize}
\end{definition}

\begin{remark} Every holonomy-trivial scalar bundle is topologically
trivial. Moreover, its Ehresmann connection identifies with the
pull-back of $TM$ through the projection $\pr:M\times
\cM\rightarrow M$.
\end{remark}

\noindent
Let $\cP(M)$ and $\cP(X)$ respectively be the sets of piece-wise 
smooth paths in $M$ and $X$ defined on the unit interval. Given a 
scalar bundle$(\pi,\cH,\cG)$, let $T$ be the parallel transport 
defined by $\cH$, which associates to a path $\gamma\in \cP(M)$ 
the diffeomorphism:
\begin{equation*}
T_{\gamma}\colon X_{\gamma(0)} \xrightarrow{\sim}  X_{\gamma(1)}\, ,
\end{equation*}

\noindent
obtained by parallel transport along $\cH$.

\begin{definition}
A \emph{duality bundle} $\Delta = (\cS,\omega,\cD)$ over $\pi$ is a triple
$(\cS,\omega,\cD)$ where $\cS$ is a vector bundle on $X$, $\omega$ is a symplectic 
structure on $\cS$ and $\cD$ is a flat connection on $\cS$ preserving $\omega$.
We denote the rank of $\cS$ by $2n_v$.
\end{definition}

\begin{remark} As shown in \cite{Lazaroiu:2017qyr}, a duality bundle
of rank $2n_v$ corresponds locally to a supergravity theory coupled to
$n_v$ \emph{vector multiplets}.
\end{remark}

\noindent
Let $(\pi,\cH,\cG)$ be a scalar bundle over $M$ and $\Delta =
(\cS,\omega,\cD)$ be a duality bundle on $\pi$, which we shall also
call a duality bundle over $(\pi,\cH,\cG)$. For any $m\in M$, let
$(\cS_m,\omega_m,\cD_m)$ be the restriction of $(\cS,\omega,\cD)$ to
the fiber $X_m = \pi^{-1}(m)$. This is a flat symplectic vector bundle
on $X_m$ and hence a duality structure on the latter as defined in
\cite{Lazaroiu:2016iav}. For any path $\Gamma\in\cP(X)$ in the total
space $X$ of $\pi$, we denote by
$\mathfrak{U}_\Gamma:\cS_{\Gamma(0)}\rightarrow \cS_{\Gamma(1)}$ the 
parallel transport of $\cD$ along $\Gamma$. Since $\cD$ is a
symplectic connection, $\mathfrak{U}_\Gamma$ is a symplectomorphism
between the symplectic vector spaces $(\cS_{\Gamma(0)},
\omega_{\Gamma(0)})$ and $(\cS_{\Gamma(1)},\omega_{\Gamma(1)})$. For
any $\gamma\in \cP(M)$, let $\bar{\gamma}_x\in \cP(X)$ be the
horizontal lift of $\gamma$ starting at the point $x\in
X_{\gamma(0)}$. By the definition of $T$, we have $\bar{\gamma}_x(1) =
T_\gamma(x)$.

\begin{definition}
The {\em extended horizontal transport} along a path $\gamma\in
\cP(M)$ is the unbased isomorphism of flat symplectic vector bundles
$\cT _\gamma:\cS_{\gamma(0)}\rightarrow \cS_{\gamma(1)}$ defined by:
\begin{equation*}
\cT_\gamma(x)\eqdef \mathfrak{U}_{\bar{\gamma}_x}\colon\cS_{x}\rightarrow \cS_{T_\gamma(x)}\, , \quad \forall\, x\in X_{\gamma(0)}\, ,
\end{equation*}	
which linearizes the Ehresmann transport $T_\gamma:X_{\gamma(0)}\rightarrow X_{\gamma(1)}$ along $\gamma$.
\end{definition}
 
\noindent
Given a duality bundle $\Delta=(\cS,\omega,\cD)$, a (compatible)
\emph{taming} $\cJ\in \Aut_b(\cS)$ on $\Delta$ is a complex structure 
on $\cS$ which tames the symplectic pairing $\omega$, i.e. it satisfies 
the compatibility condition:
\begin{equation*}
\omega(\cJ \xi_1, \cJ \xi_2 ) = \omega(\xi_1,\xi_2)\, , \qquad \forall\,\, (\xi_1 , \xi_2) \in \cS\times_X\cS, ,
\end{equation*}
and the positivity condition:
\begin{equation*}
\omega(\xi,\cJ\xi) > 0\, , \qquad \forall\,\, \xi\in \dot{\cS}~~,
\end{equation*}
where $\dot{\cS}$ is the complement of the image of the zero section in $\cS$.

\begin{definition}
\label{def:verticalJ}
A taming on the duality bundle $\Delta$ over $(\pi,\cH,\cG)$ is called
\emph{vertical} if it is preserved by the extended horizontal
transport $\cT$, i.e. if $\cT_\gamma\colon
(\cS_{\gamma(0)},\omega_{\gamma(0)} , \cJ_{\gamma(0)})\rightarrow
(\cS_{\gamma(1)},\omega_{\gamma(1)}, \cJ_{\gamma(1)})$ an isomorphism
of tamed symplectic vector bundles for all $\gamma\in \cP(M)$.
\end{definition}

\begin{remark} As explained in \cite{Lazaroiu:2016iav}, a vertical
taming on $\Delta$ is equivalent to a \emph{positive} Lagrangian
sub-bundle of the complexification of $(\cS,\omega)$ which is preserved by
the complexified extended horizontal transport.
\end{remark}

\noindent
As in \cite{Lazaroiu:2016iav,Lazaroiu:2017qyr}, we refer to the pair
$\Xi = (\Delta,\cJ)$ consisting of a duality bundle $\Delta$ defined
on $(\pi,\cH,\cG)$ and a vertical taming $\cJ$ as an
\emph{electromagnetic bundle} on $(\pi,\cH,\cG)$. We will refer to a
choice scalar bundle $(\pi,\cH,\cG)$ together with a choice of
electromagnetic bundle $\Xi$ as a {\em scalar-electromagnetic bundle}
$\Phi$, that is:
\begin{equation*}
\Phi \eqdef (\pi,\cH,\cG,\Xi)\, .
\end{equation*}

\noindent
As shown in \cite{Lazaroiu:2016iav,Lazaroiu:2017qyr}, the universal
bosonic sector of supergravity defined on $M$ is determined by the
choice of a scalar-electromagnetic bundle. Morphisms of duality and
electromagnetic bundles are defined in the natural way (see
\cite{Lazaroiu:2017qyr}). Note that standard bundle theory implies that 
isomorphism classes of duality bundles over a fixed submersion 
$\pi:X\rightarrow M$ are in one to one correspondence with the character 
variety:
\begin{equation*}
\mathfrak{M}_d (X) \eqdef \Hom(\pi_1(X), \mathrm{Sp}(2n_v,\mathbb{R}))/\mathrm{Sp}(2n_v,\mathbb{R}) \, .
\end{equation*} 

\begin{remark} In general, the character variety above has positive
dimension, giving a moduli space of inequivalent duality bundles. This
implies \cite{Lazaroiu:2017qyr} that one can construct an uncountable
infinity of globally inequivalent bosonic geometric supergravities
which are however all locally equivalent.
\end{remark}

\begin{remark} A duality bundle $\Delta=(\cS,\omega,\cD)$ is called
{\em topologically trivial} if the vector bundle $\cS$ is trivial,
i.e. if it admits a global frame.  It is called \emph{symplectically
trivial} if the $(\cS,\omega)\in \Delta$ is symplectically trivial,
i.e. if $\cS$ admits a global {\em symplectic} frame. Finally, we say
that $\Delta$ is \emph{holonomy trivial} if the holonomy of $\cD$ is
the trivial group. Holonomy-triviality implies symplectic triviality,
which in turn implies topological triviality.  If $X$ is simply
connected then every duality bundle is holonomy trivial.
\end{remark}

\noindent Smooth sections of the submersion $\pi:X\rightarrow M$ are
called \emph{scalar sections}. For every scalar section $s\colon
M\to X$ we use a superscript $s$ to denote the bundle pull-back
by $s$ and the subscript $s$ to denote push-forward by
$s$ in the appropriate category. For instance,
$\Delta^{s}=(\cS^s,\omega^s,\cD^s)$ denotes
the bundle pull-back of $\Delta=(\cS,\omega,\cS)$ by $s$, which
is a flat symplectic vector bundle over $M$. Similarly,
$\Xi^{s}=(\Delta^s,\cJ^s)$ denotes the pull-back of
$\Xi=(\Delta,\cJ)$ by $s$, which is an electromagnetic structure
on $M$ in the sense of \cite{Lazaroiu:2016iav}. Let $\Phi$ be a
scalar-electromagnetic bundle on $M$. For every Lorentzian metric $g$
on $M$ and every scalar section $s\in \Gamma(\pi)$, consider the
isomorphism of vector bundles:
\begin{equation*}
\star_{g, \cJ^{s}} \colon \wedge T^{\ast}M \otimes \cS^{s} \to  \wedge T^{\ast}M \otimes \cS^{s}\, ,
\end{equation*}
defined through $\star_{g,\cJ^{s}}=\ast_g\otimes\cJ^s$. Since both 
$\ast_g$ and $\cJ^{s}$ square to minus the identity, this restricts 
to an involutive automorphism:
\begin{equation*}
\star_{g,\cJ^{s}} \colon \wedge^2 T^{\ast}M \otimes \cS^{s} \to  \wedge^2 T^{\ast}M \otimes \cS^{s}
\end{equation*}
which gives a direct sum decomposition into eigenbundles corresponding 
to the eigenvalues $+1$ and $-1$:
\begin{equation*}
\wedge^2 T^{\ast}M \otimes \cS^{s} = (\wedge^2 T^{\ast}M \otimes \cS^{s})_{+}  \oplus (\wedge^2 T^{\ast}M \otimes \cS^{s})_{-}\, .
\end{equation*}
Here the subscript denotes the sign of the corresponding eigenvalue of
$\star_{g,\cJ^s}$. The spaces of smooth global sections of these
sub-bundles are denoted by $\Omega^2_\pm(M,\cS^{s})$ and their
elements are called polarized (anti)-selfdual $\cS^s$-valued
2-forms with respect to $\cJ^s$. We have:
\begin{equation*}
\Omega^2(M,\cS^{s}) = \Omega^2_{+}(M,\cS^{s}) \oplus \Omega^2_{-}(M,\cS^{s})\, ,
\end{equation*}
The flat symplectic connection $\cD^{s}$ of $\Delta^{s}$
defines an exterior covariant derivative acting on
$\cS^s$-valued forms defined on $M$, which we denote by:
\begin{equation*}
\dd_{\cD^{s}}\colon \Omega(M,\cS^{s}) \to \Omega(M,\cS^{s})\, .
\end{equation*}
This operator squares to zero since $\cD^{s}$ is flat. We 
denote its cohomology groups by $H^{k}(M,\Delta^{s})$ and
the corresponding total cohomology by $H(M,\Delta^{s})$. For
every scalar section $s\in \Gamma(\pi)$, we denote by 
$\mathfrak{G}^{s}_{\Delta}$ the sheaf of flat sections of 
$\Delta^s$, defined as follows:
\begin{equation*}
\mathfrak{G}^{s}_{\Delta}(U) \eqdef \left\{ \xi \in \Gamma(U,\cS^{s}) \,\, \vert\,\, \cD^{s} \xi = 0\right\}~~,
\end{equation*}
for any open set $U\subset M$.  This is a locally-constant sheaf of
symplectic vector spaces of rank $2n_{v}$, whose stalk is isomorphic
to the typical fiber of $\Delta$. Since the sheaf of smooth
$\cS^s$-valued forms is acyclic, there exists a natural
isomorphism of graded vector spaces:
\begin{equation*}
H(M,\Delta^{s}) \simeq H(M,\mathfrak{G}^{s}_{\Delta})\, , 
\end{equation*}
where $H(M,\mathfrak{G}^{s}_{\Delta})$ is the sheaf cohomology
of $\mathfrak{G}^{s}_{\Delta}$. Note that the definition of an
electromagnetic bundle $\Xi = (\Delta,\cJ)$ does not require $\cD\in
\Delta$ to be compatible with $\cJ$, a fact which is crucial for
recovering the correct local description of bosonic geometric
supergravity. The failure of $\cD$ to be compatible with $\cJ$ is
measured by the \emph{fundamental form} of an electromagnetic bundle.

\begin{definition}
Let $\Phi$ be an scalar-electromagnetic bundle. The {\em fundamental
form} $\Psi$ of $\Xi$ is the following $End(\cS)$-valued one-form 
defined on $X$:
\begin{equation*}
\Psi \eqdef \cD\cJ \in \Omega^1(X,End(\cS))\, .
\end{equation*} 
\end{definition}

\begin{remark} 
For every $v\in \Gamma(TX)$, the endomorphism
$\Psi(v)\in \End(\cS)=\Gamma(End(\cS))$ is $\cJ$-antilinear
$Q$-symmetric, where the scalar product $Q$ is the Euclidean metric
induced by $\omega$ and $\cJ$ on $\cS$ as follows:
\be
Q(\xi_1,\xi_2) \eqdef \omega(\xi_1,\cJ \xi_2)~~\forall (\xi_1,\xi_2)\in \cS\times_X \cS\, ,
\ee
See \cite{Lazaroiu:2016iav,Lazaroiu:2017qyr} for more details.
\end{remark}

\begin{definition}
An electromagnetic bundle $\Xi$ is called {\em unitary} if $\Psi = 0$.
\end{definition}

\noindent To describe the universal bosonic sector of 4d supergravity, 
we introduce three natural operations which are determined by a choice of a
scalar-electromagnetic bundle $\Phi$, a Lorentzian metric $g$ on $M$ and a 
scalar section $s \in \Gamma(\pi)$.

\begin{definition}
\label{def:tep}
The {\em twisted exterior pairing} $(\cdot,\cdot)_{g,Q^{s}}$ is
the unique pseudo-Euclidean scalar product on $\wedge
T^{\ast}M\otimes\cS^{s}$ which satisfies:
\begin{equation*}
(\rho_1\otimes \xi^{s}_1,\rho_2\otimes \xi^{s}_2)_{g,Q^{s}}=(\rho_1,\rho_2)_g Q^{s}(\xi^{s}_1,\xi^{s}_2) =(\rho_1,\rho_2)_g Q^s(\xi_1,\xi_2)
\end{equation*}
for all $\rho_1,\rho_2\in \Omega(M)$ and all $\xi_1,\xi_2\in
\Gamma(\cS^{s})$.
\end{definition}

\noindent 
Given any vector bundle $W$ on $M$, we extend this trivially to a
$W$-valued pairing (which for simplicity we denote by the same
symbol) between the bundles $W\otimes \wedge T^{\ast}M\otimes
\cS^{s}$ and $\wedge T^{\ast}M\otimes \cS^{s}$. Thus:
\begin{equation*}
(w \otimes \eta_1,\eta_2)_{g,Q^{s}} =  w \otimes (\eta_1,\eta_2)_{g,Q^{s}}\, , \quad\forall\,\, w\in \Gamma(W)\, , \quad \forall\,\, \eta_1,\eta_2\in \Omega(M,\cS^{s})~~.
\end{equation*}

\begin{definition}
The {\em inner $g$-contraction of (2,0)-tensors} is the bundle
morphism $\oslash_g:(\otimes^2T^\ast M)^{\otimes
2}\rightarrow\otimes^2 T^\ast M$ uniquely determined by the condition:
\begin{equation*}
(\alpha_1\otimes\alpha_2)\oslash_g (\alpha_3\otimes \alpha_4)=(\alpha_2,\alpha_4)_g\alpha_1\otimes \alpha_3\, , \quad \forall\, \alpha_1, \alpha_2, \alpha_3, \alpha_4\in T^{\ast}M\, .
\end{equation*}
We define the \emph{inner $g$-contraction of two-forms} to be the
restriction of $\oslash_g$ to $\wedge^2 T^\ast M \otimes \wedge^2
T^\ast M\subset (\otimes^2T^\ast M)^{\otimes 2}$.
\end{definition}

\begin{definition}
The {\em twisted inner contraction} of $\cS^{s}$-valued
two-forms is the unique morphism of vector bundles:
\begin{equation*}
\oslash_{Q^{s}}\colon\wedge^2 T^{\ast}M\otimes\cS^{s}\times_M \wedge^2 T^{\ast}M\otimes\cS^{s}\rightarrow\otimes^2(T^\ast M)
\end{equation*}
which satisfies:
\begin{equation*}
(\rho_1\otimes s_1)\oslash_{Q^{s}} (\rho_2\otimes s_2)= Q^{s} (s_1,s_2) \rho_1 \oslash_g\rho_2\, ,
\end{equation*}
for all $\rho_1,\rho_2\in \Omega^2(M)$ and all $s_1,s_2\in \Gamma(\cS^{s})$. 
\end{definition}

% % % % % % % % % % % % % % % % % % % % % % % % % % % % % % % % % % % % % % 

\subsection{The configuration space and equations of motion}

% % % % % % % % % % % % % % % % % % % % % % % % % % % % % % % % % % % % % % 

We are ready to give the geometric formulation of the universal bosonic sector of 
4d classical supergravity, whose global solutions can be interpreted locally as 
geometric classical supergravity U-folds \cite{Lazaroiu:2016spz,Lazaroiu:2017qyr}. 

\begin{definition}
Let $\Phi=(\pi,\cH,\cG,\Xi)$ be a scalar-electromagnetic
bundle on an oriented four-manifold $M$. The {\bf configuration space}
of the universal bosonic sector determined by $(M,\Phi)$ is the set:
\begin{equation*}
\Conf(\Phi) \eqdef \left\{ (g,s,\cF)  \,\, \vert \,\, g\in \Lor(M) \, , \,\, s\in \Gamma(\pi)\, , \,\, \cF  \in \Omega^2_\dcl(M,\cS^{s}) \right\}\, ,
\end{equation*}
where $\Lor(M)$ denotes the set of Lorentzian metrics on $M$ and $\Omega^2_\dcl(M,\cS^{s})$ denotes the set of $\cD^s$-closed 2-forms on $M$ valued in $\cS^s$. 
\end{definition}

\begin{remark} In general, the isomorphism class of $\cS^{s}$
depends on the scalar section $s\in \Gamma(\pi)$.
\end{remark}

\noindent
Given a scalar bundle $(\pi,\cH,\cG)$, the complete Ehresmann
connection $\cH$ can be described through a one-form $\cC\in
\Omega^1(X,\cV)=\Hom(TX,\cV)$ which restricts to the identity on $\cV\subset TX$ 
and satisfies the condition $\cC\circ\cC = \cC$. Thus $\cC\colon TX\to \cV$ 
is a projection of the tangent bundle of $X$ onto $\cV$. The horizontal 
distribution $\cH$ is recovered as the kernel of $\cC$. Given $(g,s,\cF) \in
\Conf(\Phi)$, we define the {\em vertical first fundamental form}
$(s^{\ast}_{\cC}\cG)\in \Gamma(T^{\ast}M\odot T^{\ast}M)$
through \cite{WoodI,WoodII}:
\begin{equation*}
(s^{\ast}_{\cC}\cG)(v_1 , v_2) \eqdef \cG(\cC\circ \dd s(v_1),\cC\circ \dd s(v_2))\, , \quad (v_1,v_2)\in TM\times_M TM\, ;
\end{equation*}
which depends explicitly on $\cC$ or, equivalently, $\cH$. 
The trace of this tensor with respect to $g$ is called the
\emph{vertical tension} of the section $s\in \Gamma(\pi)$. For
ease of notation, we define $\dd^{\cC} s(v) = \cC\circ \dd
s(v)\in \cV$, where $v\in TM$. Notice that $\dd^{\cC} s\in
\Omega^1(M,\cV^{s})$. Denote by $\Lor(X)$ the set of
Lorentzian metrics on $X$. Every Lorentzian metric $g$ on $M$ 
can be lifted to $\cH$ using the isomorphism of vector bundles 
$(\dd\pi\vert_{\cH})\colon \cH \xrightarrow{\sim} TM$ given by the 
restriction of $\dd\pi$ to $\cH$. Thus there exists a natural map 
(see \cite{Lazaroiu:2017qyr}):
\begin{equation*}
h\colon \Conf(\Phi) \to \Lor(X)\, , \quad (g,s,\cF)\mapsto h(g) \eqdef \pi^{\ast}g + \cG\, ,
\end{equation*}
where $h(g)$ is written using the direct sum decomposition $TX = \cH
\oplus \cV$. The Lorentzian metric $h(g)$ enters the equations of motion 
of bosonic supergravity, as explained below. Note that, equipped with the 
lifted  metric $h(g)$, $\pi \colon (X,h(g))\to (M,g)$ becomes a Lorentzian 
submersion. Given $(g,s,\cF) \in \Conf(\Phi)$, we denote by
$\nabla^{h(g)}$ the Levi-Civita connection defined by $h(g)$ on
$X$. For every scalar section $s \colon M\to X$, we denote by
$\nabla^{\Phi(g,s)}$ the connection on $TM\otimes \cV^{s}$
given by the tensor product of the Levi-Civita connection $\nabla^g$
of $g$ with the the vertical projection of the pull-back by $s$
of the connection $\nabla^{h(g)}$. We then have:
\begin{equation*}
\nabla^{\Phi(g,s)}\dd^{\cC}s \in \Gamma(T^{\ast}M\otimes T^{\ast}M\otimes \cV^{s})\, ,
\end{equation*}
as explained in \cite{WoodI,WoodII}. In particular:
\begin{eqnarray*}
\mathrm{Tr}_g(\nabla^{\Phi(g,s)}\dd^{\cC}s) \in \Gamma(\cV^{s})\, .
\end{eqnarray*}

\noindent
Given $(g,s,\cF) \in \Conf(\Phi)$, we recall that $ \Psi^s \in \Gamma((\cV^s)^{\ast}\otimes End(\cS^s))$ denotes the pull-back of $\Psi\in \Omega^1(X, End(\cS))$ by $s\colon M\to X$. Hence $(\Psi^s)^{\sharp_{\cG}}\in \Gamma(\cV^s\otimes End(\cS^s))$. For further reference we introduce symbol $(\Psi^s)^{\sharp_{\cG}}\cF_{\cA}\in \Omega^2(M,\cV^s \otimes \cS^s)$, which by definition denotes the action of $\Psi^{s}$ on $\cF_{\cA}$ as an endomorphism of $\cS^s$ while tensoring with $\cV^s$.

\begin{definition}
Let $\Phi$ be a scalar-electromagnetic bundle on $M$. The universal
bosonic sector defined by $\Phi$ on $M$ is described by following
system of partial differential equations for triples $(g,s,\cF)
\in \Conf(\Phi)$:
\begin{itemize}
\item The Einstein equations:
\begin{equation}
\label{eq:GlobalEinstein}
\mathrm{Ric}^g - \frac{g}{2} \mathrm{R}^g = \frac{1}{2} \mathrm{Tr}_g(s^{\ast}_{\cC}\cG)\, g - s^{\ast}_{\cC}\cG   + 2 \cF \oslash_{Q^{s}} \cF\, ,
\end{equation}
where $\mathrm{Ric}^g$ and $\mathrm{R}^g$ are respectively the Ricci tensor 
and Ricci scalar of $g$, while $\mathrm{Tr}_g$ denotes trace with respect to 
$g$.
\item The scalar equations:
\begin{equation}
\label{eq:GlobalScalar}
\mathrm{Tr}_g(\nabla^{\Phi(g,s)}  \dd^{\cC}s) = \frac{1}{2} (\ast \cF , (\Psi^s)^{\sharp_{\cG}}\cF)_{g,Q^{s}}\, .
\end{equation}		
\item The Maxwell equations:
\begin{equation}
\label{eq:GlobalMaxwell}
\star_{g,\cJ^s}\cF = \cF\, .
\end{equation}
\end{itemize}
We denote by $\mathrm{Sol}(\Phi) \subset \Conf(\Phi)$ the set of solutions to these equations.
\end{definition}

\begin{remark}
The configuration space $\Conf(\Phi)$ is formulated using the
\emph{field strength} two-forms instead of the appropriate notion of
gauge potential, as required by the Aharonov-Bohm effect \cite{Aharonov:1959fk}.  
The latter suggests that the gauge potentials of the theory should be described 
by connections on an appropriate principal bundle. To identify this
bundle, we must impose an appropriate DSZ integrality condition on
the field strength $\cF$. We consider this condition and its
geometric interpretation in Section \ref{sec:DQsymplecticabelian}.
\end{remark}

\noindent
The fact that the formulation given above reduces {\em
locally} to the usual formulas of local bosonic supergravity found in
the physics literature was proved in detail in references
\cite{Lazaroiu:2016iav,Lazaroiu:2017qyr}, to which we refer the reader
for further details. It is not known if this theory can be
supersymmetrized when $\cH$ is not flat, although the Killing spinor
equations can be formulated exactly as in the case when $\cH$ is flat.

% % % % % % % % % % % % % % % % % % % % % % % % % % % % % % % % % % % % % % 
% % % % % % % % % % % % % % % % % % % % % % % % % % % % % % % % % % % % % % 

\subsection{The classical U-duality group}
\label{sec:classicalglobalautgroup}

% % % % % % % % % % % % % % % % % % % % % % % % % % % % % % % % % % % % % % 
% % % % % % % % % % % % % % % % % % % % % % % % % % % % % % % % % % % % % %

In this section we characterize the \emph{global} U-duality group of the 
bosonic supergravity associated to a fixed scalar electromagnetic bundle 
$\Phi = (\pi,\cH,\cG,\Xi)$. Given a duality bundle $\Delta = (\cS,\omega,\cD)$
let $\Aut(\cS)$ denote the group of all \emph{unbased} automorphisms
of the vector bundle $\cS$.  Let $f_u \in \Diff(X)$ be the
diffeomorphism covered by $u\in \Aut(\cS)$. Moreover, let
$\Aut(\Delta)$ be the group of those unbased automorphisms of $\cS$
which preserve both $\omega$ and $\cD$:
\begin{equation*}
\Aut(\Delta) \eqdef \left\{ u\in \Aut(\cS)\,\,\vert\,\, \omega^u = \omega\, , \,\, \cD^u = \cD \right\}\, .
\end{equation*}
Let $\Aut_{\pi}(\Delta)$ be the subgroup consisting of all elements of
$\Aut(\Delta)$ which cover based automorphisms of the fiber bundle
$\pi$:
\begin{equation*}
\Aut_{\pi}(\Delta) \eqdef\left\{ u\in \Aut(\Delta)\,\, \vert\,\, f_u\in \Aut_b(\pi)\right\}=\left\{ u\in \Aut(\Delta)\,\, \vert\,\, \pi\circ f_u=\pi\right\}\, .
\end{equation*}
We have a short exact sequence of groups:
\ben
\label{ses1}
1 \to \Aut_b(\Delta) \to \Aut_{\pi}(\Delta) \to \Aut_{b}^0(\pi)\to 1\, ,
\een
where $\Aut_{b}^0(\pi)\subset \Aut_{b}(\pi)$ is the subgroup
of those automorphisms of $\pi$ which are covered by elements of
$\Aut(\Delta)$.

Given a scalar bundle $(\pi,\cH,\cG)$ and an element $u\in
\Aut_{\pi}(\Delta)$, the fiber bundle automorphism $f_u\in
\Aut_b(\pi)$ covered by $u$ acts as a gauge transformation on $\cH$
through push-forward $\cH_{u} \eqdef (f_u)_{\ast}\cH$. Similarly, 
since $f_u$ is an automorphism of $\pi$ covering the identity,
the push-forward of $\cG$ by $f_u$ defines a new vertical Riemannian 
metric $\cG_u \eqdef (f_{u})_\ast\cG$ on $\pi$ such that $(X_m,\cG_m)$ 
is isometric to $(X_m,(\cG_u)_m)$ for all $m\in M$. Given an electromagnetic 
bundle $\Xi=(\Delta,\cJ)$, push-forward by $f_u$ produces another
electromagnetic bundle which we denote by $(\Delta_u,\cJ_u)$. Given a
scalar-electromagnetic bundle $\Phi = (\pi,\cH,\cG,\Delta,\cJ)$, the
system:
\begin{equation*}
\Phi_u \eqdef (\pi,\cH_u,\cG_u,\Delta_u,\cJ_u)
\end{equation*}
is a scalar-electromagnetic bundle with the same underlying
submersion $\pi\colon X\to M$. If $\cC\in \Omega^1(X,\cV)$ is
the connection one-form associated to $\cH$ then the natural
push-forward $f_{u\ast}\cC\in\Omega^1(X,\cV)$ is the connection 
one-form associated to $\cH_u$.

\begin{remark} Since elements of $\Aut(\cS)$ may cover non-trivial
diffeomorphisms of $X$, the pull-back or push-forward operations must
be dealt with care (see \cite{Lazaroiu:2016iav}). Explicitly, define
the following action of $\Aut(\cS)$ on sections of $\cS$:
\begin{equation*}
u\cdot \xi = u\circ \xi \circ f_u^{-1} \colon M \to \cS \, , \qquad u\in \Aut(\cS)\, , \qquad \xi\in \Gamma(\cS)\, .
\end{equation*}
This gives an isomorphism of real vector spaces $u\colon \Gamma(\cS)
\to \Gamma(\cS)$ for every element $u\in \Aut(\cS)$. We have
$\omega^u = \omega$ if and only if:
\begin{equation*}
(\omega^u)(\xi_1,\xi_2) \eqdef \omega(u\cdot \xi_1 , u\cdot \xi_2) \circ f_u = \omega(\xi_1,\xi_2)\, , \qquad \forall\,\, \xi_1 , \xi_2 \in \Gamma(\cS)\, .
\end{equation*}
Likewise, we have $\cD^u = \cD$ if and only if:
\begin{equation*}
\cD^u_{v}(\xi) \eqdef u^{-1}\cdot \cD_{f_{u\ast}\cdot v} (u\cdot \xi) = \cD_v (\xi)\, , \quad \forall\,\, \xi\in \Gamma(\cS)\, , \quad \forall\,\, v\in \Gamma(TX)\, ,
\end{equation*}
where $f_{u\ast}\cdot v = \dd f_{u}(v)\circ f^{-1}_u$ and $\dd f_u \colon TX\to TX$ 
is the ordinary differential of $f_u\in \Diff(X)$. Recall that if 
$v\in \Gamma(TX)$ then $\dd f_{u}(v)$ is not a vector field on $X$ but 
a section of $TX$ along $f_u$, whereas $f_{u\ast }\cdot v \in \Gamma(TX)$ 
is again a vector field on $X$. To illustrate the inner workings of the 
pull-backed connection $D^u$ we verify that it satisfies the Leibniz identity:
\begin{eqnarray*}
&\cD^u_{v}(\kappa\, \xi) = u^{-1}\cdot \cD_{f_{u\ast} \cdot v} (u\cdot (\kappa\, \xi)) = u^{-1}\cdot \cD_{f_{u\ast} \cdot v} ((\kappa\circ f_u^{-1})\, u\cdot \xi) = u^{-1}\cdot (\dd (\kappa\circ f_u^{-1})(f_{u\ast} \cdot v) \,u\cdot \xi) \\
& + u^{-1}\cdot (\kappa\circ f_u^{-1}\, \cD_{f_{u\ast} \cdot v}(u\cdot\xi)) = u^{-1}\cdot (\dd \kappa (v\circ f_u^{-1}) \,u\cdot \xi) + u^{-1}\cdot (\kappa\circ f_u^{-1}\, \cD_{v}(u\cdot\xi)) =   \dd \kappa (v)\, \xi  + \kappa\, \cD^u_{v}(\xi) \, ,
\end{eqnarray*}
where $\kappa \in C^{\infty}(X)$ is a function on $X$. On the other hand, 
the push-forward  of $\cJ$ by $u\in \Aut(\cS)$ is given by:
\begin{equation*}
\cJ_u(\xi) \eqdef (u\cdot\cJ(u^{-1}\cdot \xi)) = u\circ\cJ(u^{-1}\circ \xi)\, ,  
\end{equation*}
for every $\xi\in \Gamma(\cS)$.
\end{remark}

\noindent
Given a duality bundle $\Delta$ over the scalar bundle $(\pi,\cH,\cG)$, every element $u\in \Aut(\Delta)$ maps a triplet of the form:
\begin{equation*}
(g,s,\cF)\in \Lor(M)\times \Gamma(\pi)\times \Omega^2(M,\cS^{s})\, ,
\end{equation*}
to a triplet of the form:
\begin{equation*}
\mathbb{A}_u(g,s,\cF) \eqdef (g, f_u\circ s, u\cdot\cF)\in \Lor(M)\times \Gamma(\pi)\times \Omega^2(M,\cS^{f_u(s)})\, ,
\end{equation*}
where \emph{dot} denotes the natural action of $\Aut(\cS)$ on $\cS^{s}$-valued forms. 

\begin{remark}
Recall that $\cF_m\in \wedge^2 T^{\ast}_m M\otimes\cS^s_{m}$ or, equivalently:
\begin{equation*}
\cF_m\in \wedge^2 T^{\ast}_m M\otimes\cS_{s(m)}\, .
\end{equation*}
The push-forward $u\cdot\cF\in \Omega^2(M,\cS^{f_u(s)})$ of
$\cF\in \Omega^2(M,\cS^{s})$ by $u$ produces a
$\cS^{f_u(s)}$-valued two-form on $M$ whose value at $m \in M$
is given by:
\begin{equation*}
(u\cdot \cF)_m = u_{s(m)}(\cF_m)\in \wedge^2 T^{\ast}_m M\otimes\cS_{f_u(s(m))}\, ,
\end{equation*} 
where $u_{s(m)}\colon \cS_{s(m)} \to
\cS_{f_u(s(m))}$ acts trivially on the two-form components of
$\cF$.
\end{remark}
Given $u\in\Aut(\Delta)$, the map $\mathbb{A}_u$ defined above need
not preserve the configuration space $\Conf(\Phi)$ defined by a fixed
scalar-electromagnetic bundle $\Phi = (\pi,\cH,\cG,\Xi)$. Instead we
have the following result.

\begin{thm}
\label{thm:equivsolutions}
Let $\pi\colon X\to M$ be a smooth submersion. For every connection
$\cH$, vertical metric $\cG$ and electromagnetic bundle $\Xi$ on
$\pi$, an element $u\in \Aut_{\pi}(\Delta)$ defines a bijection:
\begin{equation*}
\mathbb{A}_u \colon \Conf(\Phi) \xrightarrow{\sim} \Conf(\Phi_u)\, , \quad (g,s,\cF) \mapsto (g, f_u\circ s, u\cdot\cF)\, ,
\end{equation*}
which restricts to a bijection:
\begin{equation*}
\mathbb{A}_u \colon \Sol(\Phi) \xrightarrow{\sim} \Sol(\Phi_u)\, ,
\end{equation*}
between the solution spaces of the bosonic supergravities associated to $\Phi$
and $\Phi_u$ on $(\pi,\cH,\cG)$.
\end{thm}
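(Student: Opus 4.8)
The plan is to verify directly that $\mathbb{A}_u$ sends each piece of the defining data of $\Conf(\Phi)$ to the corresponding piece for $\Conf(\Phi_u)$, then check that it intertwines the three equations of motion. First I would record what $u\in\Aut_\pi(\Delta)$ does to the ingredients: since $f_u\in\Aut_b(\pi)$, the composition $f_u\circ s$ is again a scalar section of $\pi$, so the first and second components of $\mathbb{A}_u(g,s,\cF)$ are legitimate. For the third, one uses the canonical identification $\cS^{f_u\circ s}=f_u^\ast\cS^{s}$ coming from $\pi\circ f_u=\pi$ together with the pointwise description $(u\cdot\cF)_m=u_{s(m)}(\cF_m)$ recalled in the Remark above; since $u$ preserves $\cD$, it intertwines the pulled-back differentials $\dd_{\cD^{s}}$ and $\dd_{\cD^{f_u\circ s}}$, so $\dd_{\cD^{s}}\cF=0$ implies $\dd_{\cD^{f_u\circ s}}(u\cdot\cF)=0$. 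Hence $u\cdot\cF\in\Omega^2_{\dcl}(M,\cS^{f_u\circ s})$, and $\mathbb{A}_u$ lands in $\Conf(\Phi_u)$. Bijectivity is immediate: $\mathbb{A}_{u^{-1}}$ (formed with respect to $\Phi_u$, using $(f_u)_\ast\cH=\cH_u$ etc.) is a two-sided inverse, because $u\mapsto f_u$ is a group homomorphism and the action of $\Aut(\cS)$ on sections and on $\cS^{s}$-valued forms is functorial; this also shows $(\Phi_u)_{u^{-1}}=\Phi$.

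Next I would treat the equations of motion. The key point is that all the geometric operations entering \eqref{eq:GlobalEinstein}--\eqref{eq:GlobalMaxwell} are \emph{natural} under the simultaneous push-forward by $u$ of the electromagnetic bundle and by $f_u$ of $\cH$, $\cG$. Concretely: the vertical first fundamental form transforms as $(f_u\circ s)^\ast_{\cC_u}\cG_u=s^\ast_\cC\cG$, because $\cC_u=f_{u\ast}\cC$ and $\cG_u=(f_u)_\ast\cG$ are exactly the push-forwards making $\cG_u(\cC_u\dd(f_u\circ s)(v),\cdot)=\cG(\cC\,\dd s(v),\cdot)$; similarly $h(g)$ computed for $\Phi_u$ equals $(f_u)_\ast$ of $h(g)$ computed for $\Phi$, so the Levi-Civita connection $\nabla^{h(g)}$, its vertical projection, and hence $\nabla^{\Phi_u(g,f_u\circ s)}\dd^{\cC_u}(f_u\circ s)$ agree with the $u$-transforms of their $\Phi$-counterparts. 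For the pairings, since $u$ preserves $\omega$ and $\cJ_u=u\cdot\cJ\cdot u^{-1}$, it preserves the induced Euclidean metric $Q$, so $Q^{f_u\circ s}(u\cdot\xi_1,u\cdot\xi_2)=Q^{s}(\xi_1,\xi_2)$; this makes $\mathbb{A}_u$ intertwine $\oslash_{Q^{s}}$ with $\oslash_{Q^{f_u\circ s}}$ and the twisted exterior pairing $(\cdot,\cdot)_{g,Q^{s}}$ with $(\cdot,\cdot)_{g,Q^{f_u\circ s}}$. Likewise $\Psi_u=u\cdot\Psi\cdot u^{-1}$ because $\Psi=\cD\cJ$ and $u$ preserves $\cD$, so the right-hand side of \eqref{eq:GlobalScalar} is covariant. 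Finally, for the Maxwell equation \eqref{eq:GlobalMaxwell}, $\star_{g,\cJ^{s}}=\ast_g\otimes\cJ^{s}$, and since $\cJ^{f_u\circ s}\circ u=u\circ\cJ^{s}$, we get $\star_{g,\cJ^{f_u\circ s}}(u\cdot\cF)=u\cdot(\star_{g,\cJ^{s}}\cF)$; thus $\star_{g,\cJ^{s}}\cF=\cF$ iff $\star_{g,\cJ^{f_u\circ s}}(u\cdot\cF)=u\cdot\cF$. Assembling these three covariance statements shows $(g,s,\cF)\in\Sol(\Phi)$ iff $\mathbb{A}_u(g,s,\cF)\in\Sol(\Phi_u)$, giving the desired restriction.

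The main obstacle is bookkeeping rather than conceptual: one must handle carefully the fact that $u\in\Aut(\cS)$ covers a \emph{nontrivial} diffeomorphism $f_u$ of $X$, so that all pull-backs and push-forwards of connections and tensors along $f_u$ must be taken in the twisted sense of the Remark following the statement (the formulas $f_{u\ast}\cdot v=\dd f_u(v)\circ f_u^{-1}$, $u\cdot\xi=u\circ\xi\circ f_u^{-1}$, etc.). The delicate checks are (i) that $\nabla^{h(g)}$ is genuinely natural under $(f_u)_\ast$ — this is just naturality of the Levi-Civita construction applied to the isometry-up-to-push-forward $f_u\colon(X,h(g))\to(X,(f_u)_\ast h(g))$ — and (ii) that the identification $\cS^{f_u\circ s}\cong f_u^\ast\cS^{s}$ is compatible with all of $\omega^{s},\cD^{s},\cJ^{s}$ simultaneously, which follows from $u\in\Aut_\pi(\Delta)$ by unwinding the definitions. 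Once these naturality statements are in place, the theorem follows by combining them term by term in \eqref{eq:GlobalEinstein}--\eqref{eq:GlobalMaxwell}.
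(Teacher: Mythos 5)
Your proposal is correct and follows essentially the same route as the paper: a term-by-term verification that the vertical first fundamental form, the pairings $\oslash_{Q^{s}}$ and $(\cdot,\cdot)_{g,Q^{s}}$, the fundamental form $\Psi$, the twisted differential $\dd_{\cD^{s}}$ and the operator $\star_{g,\cJ^{s}}$ are all natural under the simultaneous push-forward by $u$ and $f_u$, so that each of \eqref{eq:GlobalEinstein}--\eqref{eq:GlobalMaxwell} is carried to its $\Phi_u$-counterpart. The paper checks the flatness and Maxwell conditions by reduction to homogeneous elements $\alpha\otimes\xi^{s}$ where you invoke the intertwining relations abstractly, and you are somewhat more explicit about bijectivity (via $\mathbb{A}_{u^{-1}}$) than the paper, but these are presentational differences only.
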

 
\begin{proof}
Assume that $(g,s,\cF)\in \Conf(\Phi)$, where $\Phi = (\pi,\cH,\cG,\Delta,\cJ)$ 
and $u\in \Aut_{\pi}(\Delta)$ covers $f_u\in \Aut_b(X)$. Clearly,
$f_u\circ s \colon M\to X$ is again a section of $\pi$ since $f_u\colon
X\to X$ is covers the identity over $M$. On the other hand, $u\cdot\cF$
is by construction a two-form on $M$ taking values in
$\cS^{f_u(s)}$ whence $(g,s,\cF)\in \Conf(\Phi_u)$. The fact that this map 
takes solutions to solutions follows by a computation that involves several
different pull-backs through unbased automorphisms of fiber bundles. The reader 
is referred to \cite[Appendix D]{Lazaroiu:2016iav} for a detailed account of 
the operations involved. Assume that $(g,s,\cF)\in\Sol(\Phi)$. For the 
Einstein equation \eqref{eq:GlobalEinstein}, we compute:
\begin{equation}
\label{eq:proof1}
s^{\ast}_{\cC}\cG = (f_u^{-1}\circ f_u\circ s)^{\ast}_{\cC}\cG = (f_u\circ s)^{\ast} ((f_u^{-1})^{\ast}_{\cC}\cG) = (f_u\circ s)^{\ast}_{f_{u\ast}\cC}\cG	\, .
\end{equation}
On the other hand, we have:
\begin{equation}
\label{eq:proof2}
\cF \oslash_{Q^{s}} \cF = (u^{-1}\cdot u\cdot \cF) \oslash_{Q^{s}} (u^{-1}\cdot u\cdot\cF) = (u\cdot \cF) \oslash_{Q^{f_u(s)}_u} ( u\cdot\cF)\, ,
\end{equation}
where $Q^{f_u(s)}_u$ denotes the bilinear form on $\cS^{s}$ defined as follows:
\begin{equation*}
Q^{f_u(s)}_u(\xi^{f_u(s)} , \xi^{f_u(s)}) = \omega ( \xi(f_u(s)) , \cJ(f_u(s)) \xi(f_u(s)) ) \, ,
\end{equation*}
and where $\xi^{f_u(s)} \in \cS^{f_u(s)}$ for every $\xi\in
\cS$. Combining equations \eqref{eq:proof1} and \eqref{eq:proof2}
together with the fact that the left hand side of Equation
\eqref{eq:GlobalEinstein} is invariant under $u$ we obtain that
$(g,f_u\circ s,u\cdot\cF)$ satisfies the Einstein equations with
respect to the scalar-electromagnetic structure
$(\pi,\cG,\cH_u,\Delta_u,\cJ_u)$. For the scalar equation
\eqref{eq:GlobalScalar}, we compute:
\begin{equation*}
\nabla^{\Phi(g,s)}  \dd^{\cC}s =  \nabla^{\Phi(g,s)}  \dd^{\cC}(f_u^{-1}\circ f_u\circ s)  =  \nabla^{\Phi_u(g_u,f_u(s))}  \dd^{f_{u\ast}\cC}(f_u\circ s)\, .
\end{equation*}
Similarly, using the fact that $u\in \Aut(\Delta)$ preserves the flat connection
$\cD$ determined by $\Delta$ together with equation \eqref{eq:proof2}, we obtain:
\begin{equation*}
(\ast \cF , \Psi^{s}\cF)_{g,Q^{s}} = (\ast (u\cdot\cF) , \Psi^{f_u(s)}_u (u\cdot\cF))_{g,Q^{f_u(s)}}\, , 
\end{equation*}
where we have defined:
\begin{equation*}
\Psi^{f_u(s)}_u = (\cD\cJ_u)^{f_u(s)}\, .
\end{equation*}
Hence $(g,f_u\circ s,u\cdot\cF)$ satisfies the scalar equations associated to
the scalar-electromagnetic structure $\Phi_u = (\pi,\cG,\cH_u,\Delta_u,\cJ_u)$. Since the 
flatness condition for $\cF$ is linear in $\cF$ it is enough to verify it on an homogenous
element of the form $\cF = \alpha\otimes \xi^{s}$, where $\alpha \in \Omega^2(M)$ and 
$\xi^s$ is the pull-back by $s$ of a section $\xi\in \Gamma(\cS)$. We compute:
\begin{eqnarray*}
& \dd_{\cD^{f_u(s)}} (u\cdot\cF) = \dd_{\cD^{f_u(s)}}  (\alpha\otimes u\cdot \xi^{s}) = \dd_{\cD^{f_u(s)}} (\alpha\otimes (u\circ\xi\circ f^{-1}_u \circ f_u(s))) = \dd\alpha\otimes (u\cdot \xi^{s}) \\
& + \alpha\otimes \cD^{f_u(s)} (u\circ\xi\circ f^{-1}_u)^{f_u(s)}  = \dd\alpha\otimes (u\cdot \xi^{s}) + \alpha\otimes (\cD (u\circ\xi\circ f^{-1}_u))^{f_u(s)} = \dd\alpha\otimes (u\cdot \xi^{s}) \\
&  + \alpha\otimes (u\circ\cD\xi\circ f^{-1}_u)^{f_u(s)} = \dd\alpha\otimes (u\cdot \xi^{s}) + \alpha\otimes u\cdot\cD^{s}\xi^{s} = u\cdot \dd_{\cD^s}\cF = 0\, ,
\end{eqnarray*}
where we have usted that $u\in \Aut_{\pi}(\Delta)$ preserves the symplectic connection
$\cD$. Whence $u\cdot \cF$ is flat with respect to $\cD^{f_u (s)}$. On the other hand, 
the Maxwell equation is also linear in $\cF$ hence it is enough to verify it on an homogenous
element $\cF = \alpha\otimes \xi^{s}$. We obtain:
\begin{eqnarray*}
& \star_{g,\cJ^{f_u(s)}_{u}}(u\cdot \cF) = \ast_g \alpha \otimes \cJ^{f_u(s)}_{u} (u\cdot \xi^s) = \ast_g \alpha \otimes \cJ^{f_u(s)}_{u} (u\circ \xi\circ f_u^{-1}\circ f_u(s))  \\
& =  \ast_g \alpha \otimes (\cJ_{u} (u\circ \xi\circ f_u^{-1}))^{f_u(s)} =  \ast_g \alpha \otimes (u\circ\cJ(\xi)\circ f_u^{-1}))^{f_u(s)} = \ast_g \alpha \otimes u\cdot \cJ^s (\xi^s) =u\cdot  \star_{g,\cJ^{s}}\cF  =u\cdot \cF \, ,
\end{eqnarray*}
whence $(g,f_u\circ s,u\cdot\cF)$ also satisfies the Maxwell equations 
associated to $(\pi,\cG,\cH_u,\Delta_u,\cJ_u)$. Thus 
$(g,f_u\circ s,u\cdot\cF)\in\Sol(\pi,\cH_u,\cG_u,\Delta_u,\cJ_u)$ and
reversing the previous relations it is easy to see that the map
$(g,s, \cF)\mapsto (g,f_u\circ s,u\cdot\cF)$ is a bijection.
\end{proof}

\begin{remark}
The group $\Aut_{\pi}(\Delta)$ is the global counterpart of the
so-called \emph{pseudo-duality group} introduced in \cite{Hull:1995gk}
as the direct product of the symplectic group and the diffeomorphism
group of the simply connected open set on which the theory is considered. 
When both $\Delta$ and $\pi$ are non-trivial, the group $\Aut_{\pi}(\Delta)$ 
can differ markedly from the local pseudo-duality group of loc. cit.
\end{remark}

\noindent
The following statement follows from \cite[Lemma 4.2.8]{Donaldson}.

\begin{lemma}
\label{lemma:Holfinite}
Let $\Delta$ be a duality bundle over the submersion $\pi:X\rightarrow
M$ and consider a point $x \in X$. Then there exists a canonical
isomorphism:
\begin{equation*}
\Aut_b(\Delta) = \mathrm{C}(\mathrm{Hol}_x(\cD), \Aut(S_x,\omega_x))\, ,
\end{equation*}
where $\mathrm{Hol}_x(\cD)$ is the holonomy group of $\cD$ at $x$,
$\Aut(S_x,\omega_x) \simeq \mathrm{Sp}(2n_v,\mathbb{R})$ is the
automorphism group of the fiber $(S_x, \omega_x) =(S,\omega)\vert_x$
and $\mathrm{C}(\mathrm{Hol}_x(\cD), \Aut(S_x,\omega_x)))$ denotes the
centralizer of $\mathrm{Hol}_x(\cD)$ in $\Aut(S_x,\omega_x)$.
\end{lemma}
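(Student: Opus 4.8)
The plan is to identify $\Aut_b(\Delta)$ with the group of $\cD$-parallel, fiberwise invertible sections of $\iEnd(\cS)$ and then invoke the holonomy principle. By definition, $\Aut_b(\Delta)$ consists of those automorphisms $u$ of the vector bundle $\cS$ covering $\id_X$ which preserve $\omega$ and $\cD$; such a $u$ is equivalently a section $u\in\Gamma(\iEnd(\cS))$ which is invertible over every point of $X$. The flat connection $\cD$ induces a flat connection on $\iEnd(\cS)$, and the requirement $\cD^u=\cD$ (the pulled-back connection $\cD^u$ of the earlier Remark, specialized to $f_u=\id_X$, where it reads $\cD^u_v(\xi)=u^{-1}\circ\cD_v(u\circ\xi)$) is precisely the condition that $u$ be parallel for this induced connection. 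Moreover $\omega^u=\omega$ says exactly that $u$ takes values in the subbundle $\underline{\Aut}(\cS,\omega)\subset\iEnd(\cS)$ of fiberwise symplectic automorphisms, a subbundle preserved by the induced connection because $\cD$ preserves $\omega$.

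I would then apply the standard correspondence between parallel sections of a flat bundle over a connected base and invariants of its holonomy representation at a point, which is the content of \cite[Lemma 4.2.8]{Donaldson}. Since $X$ is connected, evaluation at the fixed point $x\in X$ gives an injective map from $\cD$-parallel sections of $\iEnd(\cS)$ to $\End(S_x)$, whose image is the set of endomorphisms fixed by the holonomy action. On $\iEnd(\cS)$ that action is by conjugation through $\Hol_x(\cD)\subset\Aut(S_x,\omega_x)$, so an element of $\End(S_x)$ is invariant precisely when it commutes with every element of $\Hol_x(\cD)$. Imposing in addition the symplectic condition, evaluation at $x$ carries $\Aut_b(\Delta)$ bijectively onto the set of $a\in\Aut(S_x,\omega_x)$ commuting with $\Hol_x(\cD)$, namely $\mathrm{C}(\Hol_x(\cD),\Aut(S_x,\omega_x))$. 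Since composition of automorphisms of $\cS$ is sent to composition of their values at $x$, this bijection is a group isomorphism, and it is canonical once $x$ has been chosen.

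The only point needing care is the surjectivity of evaluation at $x$, i.e. that every $a\in\mathrm{C}(\Hol_x(\cD),\Aut(S_x,\omega_x))$ extends to a global automorphism. I would construct $u$ by transport: for $y\in X$ and any path $\Gamma$ from $x$ to $y$, set $u_y\eqdef\mathfrak{U}_\Gamma\circ a\circ\mathfrak{U}_\Gamma^{-1}\colon\cS_y\to\cS_y$. Two choices of $\Gamma$ differ by a loop at $x$, whose parallel transport lies in $\Hol_x(\cD)$, so $u_y$ is independent of the choice precisely because $a$ commutes with $\Hol_x(\cD)$; hence $u$ is a well-defined section of $\iEnd(\cS)$. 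It is automatically smooth and parallel, being locally constant in parallel frames; it is fiberwise invertible, with inverse constructed the same way from $a^{-1}$; and it is valued in $\underline{\Aut}(\cS,\omega)$ because each $\mathfrak{U}_\Gamma$ is a symplectomorphism and $a\in\Aut(S_x,\omega_x)$. Thus $u\in\Aut_b(\Delta)$ with $u_x=a$, which completes the identification. The main obstacle is thus exactly this well-definedness argument, where the centralizer condition is what makes the transported endomorphism single-valued.
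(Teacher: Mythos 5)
Your argument is correct and is essentially the paper's own proof: the paper simply cites \cite[Lemma 4.2.8]{Donaldson}, and what you have written out (parallel sections of $\iEnd(\cS)$ correspond to holonomy-invariants at $x$, with the centralizer condition ensuring well-definedness of the parallel-transport extension) is precisely the content of that citation, specialized to the symplectic subbundle. No gaps; the connectedness of $X$, which you use for the evaluation map to be injective and for paths from $x$ to exist, is part of the paper's standing assumptions.
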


\noindent
Fixing $x\in X$, this shows that \eqref{ses1} is isomorphic with the exact sequence:
\begin{equation*}
1 \to \mathrm{C}(\mathrm{Hol}_x(\cD), \Aut(S_x,\omega_x))) \to \Aut_{\pi}(\Delta) \to \Aut^0_b(\pi) \to 1\, .
\end{equation*}

\noindent
Given a scalar bundle $(\pi,\cH,\cG)$ and an electromagnetic bundle
$\Xi = (\Delta,\cJ)$ we next introduce a subgroup of $\Aut_\pi(\Delta)$ 
which preserves the Ehresmann connection $\cH$, the vertical metric $\cG$ 
and the vertical taming $\cJ$. This subgroup gives the global counterpart 
of the group of \emph{continuous} U-dualities studied traditionally in
the supergravity literature.

\begin{definition}
Let $\Phi = (\pi,\cH,\cG,\Delta,\cJ)$ be a scalar-electromagnetic
bundle on $M$. The {\bf classical U-duality group} of $\Phi$ is the
subgroup $\U(\Phi)$ of $\Aut_{\pi}(\Delta)$ consisting of those
elements which preserve the Ehresmann connection $\cH$, the vertical
metric $\cG$ and the vertical taming $\cJ$:
\begin{equation*}
\U(\Phi) \eqdef \left\{ u\in \Aut_{\pi}(\Delta)\,\,\vert\,\,  \cH_u =\cH \, , \, \, \cG_u =\cG \, , \, \, \cJ_{u} = \cJ \right\}\, .
\end{equation*}  
\end{definition}

\noindent
Similarly, we denote by $\U_o(\Phi) \subset \U(\Phi)$ the subgroup of 
$\U(\Phi)$ consisting of those elements that cover diffeomorphisms of $X$ 
isotopic to the identity. Let $\Aut_b(\Xi) \subset \Aut_b(\Delta) $ be the 
group based automorphisms of $\Xi$, which consists of those vector bundle
automorphisms of $\cS$ which cover the identity and preserve $\omega$,
$\cD$ and $\cJ$. The U-duality group fits into a short exact sequence:
\begin{equation*}
1 \to \Aut_b(\Xi)\to \U(\Phi)\to \Aut_b^0(\pi,\cH,\cG) \to 1\, ,
\end{equation*}

\noindent where $\Aut_b^0(\pi,\cH,\cG) \subset \Aut_{b}(\pi)$ denotes
the subgroup of those based automorphisms of $\pi$ that can be covered
by elements of $\U(\Phi)$ and preserve both the Ehresmann connection
$\cH$ and the vertical metric $\cG$. If the scalar bundle
$(\pi,\cH,\cG)$ is flat then the group $\Aut_b(\pi(\cH,\cG))$ is
finite-dimensional by Lemma \ref{lemma:Holfinite}, which in turn
implies that $\U(\Phi)$ is a finite-dimensional Lie group. In general,
this group is markedly different from the U-duality group
traditionally considered in the local formulation of the theory. The
main feature of the latter is that maps solutions to solutions and
hence it can be used as a solution generating mechanism. This key
property also holds for $\U(\Phi)$ as a consequence of Theorem
\ref{thm:equivsolutions}.

\begin{cor}
The action $\mathbb{A}$ of the U-duality group $\U(\Phi)$ preserves
both $\Conf(\Phi)$ and $\Sol(\Phi)$. i.e. it maps configurations to
configurations and solutions to solutions. Moreover, if the scalar
bundle $(\pi, \cH,\cG)\in \Phi$ is flat then $\U(\Phi)$ is a
finite-dimensional Lie group.
\end{cor}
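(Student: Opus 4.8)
The plan is to read off both statements from results already in place. For the first, I would begin by noting that an element $u\in\U(\Phi)$ satisfies $\Phi_u=\Phi$: by definition of $\U(\Phi)$ one has $\cH_u=\cH$, $\cG_u=\cG$ and $\cJ_u=\cJ$, while $\Delta_u=\Delta$ because $u$ — and therefore $u^{-1}$ — lies in $\Aut(\Delta)$, so push-forward by $f_u$ leaves the pair $(\omega,\cD)$ unchanged. Granting $\Phi_u=\Phi$, Theorem~\ref{thm:equivsolutions} asserts exactly that $\mathbb{A}_u$ is a bijection of $\Conf(\Phi)$ restricting to a bijection of $\Sol(\Phi)$, which is the first sentence of the Corollary. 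To see that the maps $\mathbb{A}_u$ genuinely assemble into an action of the group $\U(\Phi)$ I would check $\mathbb{A}_{uv}=\mathbb{A}_u\circ\mathbb{A}_v$; this follows from $f_{uv}=f_u\circ f_v$ in $\Diff(X)$ and from $u\mapsto(\xi\mapsto u\cdot\xi)$ being a left action on $\Gamma(\cS)$, transported to $\cS^s$-valued two-forms through the functoriality of pull-back along $s\mapsto f_u\circ s$.

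For the second statement, assume the scalar bundle $(\pi,\cH,\cG)$ is flat and invoke the short exact sequence $1\to\Aut_b(\Xi)\to\U(\Phi)\to\Aut_b^0(\pi,\cH,\cG)\to1$ recorded above. The kernel $\Aut_b(\Xi)$ sits inside $\Aut_b(\Delta)$, which by Lemma~\ref{lemma:Holfinite} is the centralizer of $\Hol_x(\cD)$ in $\Aut(S_x,\omega_x)\simeq\mathrm{Sp}(2n_v,\R)$; cutting this down by the further closed condition of commuting with $\cJ_x$ still leaves a closed subgroup of a finite-dimensional Lie group, hence a finite-dimensional Lie group. The quotient $\Aut_b^0(\pi,\cH,\cG)$ is also finite-dimensional: because $M$ is connected and any such automorphism commutes with $\cH$-parallel transport — which, by flatness, depends only on the homotopy class of the path — it is determined by its restriction to a single fiber $(X_{m_0},\cG_{m_0})$, and that restriction is an isometry of $(X_{m_0},\cG_{m_0})$ commuting with the holonomy of $\cH$, hence an element of the finite-dimensional isometry group of $(X_{m_0},\cG_{m_0})$. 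An extension of finite-dimensional Lie groups is again one, so $\U(\Phi)$ is a finite-dimensional Lie group; alternatively one exhibits $\U(\Phi)$ directly as a closed subgroup of the (now finite-dimensional) group $\Aut_\pi(\Delta)$, which avoids appealing to an abstract extension theorem.

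I do not anticipate a serious obstacle here, since the nontrivial content is carried by Theorem~\ref{thm:equivsolutions} and Lemma~\ref{lemma:Holfinite}, both already available; the only points that demand attention are the routine verification that $\mathbb{A}$ is an action and the mild topological point that an extension of finite-dimensional Lie groups, in the function-space topology relevant here, is itself finite-dimensional.
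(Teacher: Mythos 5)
Your proposal is correct and follows essentially the same route as the paper, which likewise treats the corollary as an immediate consequence of Theorem~\ref{thm:equivsolutions} (applied with $\Phi_u=\Phi$ once all the defining data of $\Phi$ are preserved by $u$) together with the short exact sequence for $\U(\Phi)$ and Lemma~\ref{lemma:Holfinite} for finite-dimensionality, the quotient being controlled by the commutant of the holonomy of $\cH$ inside the isometry group of the typical fiber exactly as you argue. The extra checks you flag (that $\mathbb{A}$ is a genuine action, and that the extension is finite-dimensional) are routine and consistent with the paper's implicit reasoning.
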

 
\noindent For further reference we introduce the following definition.

\begin{definition} The \emph{classical U-duality transformation}
defined by an element $u\in \U(\Phi)$ is the bijection
$\mathbb{A}_u\colon \Sol(\Phi) \to \Sol(\Phi)$.
\end{definition}

% % % % % % % % % % % % % % % % % % % % % % % % % % % % % % % % % % % % % % 
% % % % % % % % % % % % % % % % % % % % % % % % % % % % % % % % % % % % % % 

\section{The Dirac-Schwinger-Zwanziger integrality condition}
\label{sec:DQsymplecticabelian}

% % % % % % % % % % % % % % % % % % % % % % % % % % % % % % % % % % % % % % 
% % % % % % % % % % % % % % % % % % % % % % % % % % % % % % % % % % % % % % 

This section discusses the geometric model obtained by imposing the
DSZ integrality condition on the universal bosonic sector of
four-dimensional supergravity defined by a fixed
scalar-electromagnetic bundle. This condition depends on the choice of
a {\em Dirac system} for the underlying duality bundle $\Delta$ and of
the choice of an \emph{integral} cohomology class in $H^2(M,\Delta)$, 
where integrality is defined relative to that Dirac system.

% % % % % % % % % % % % % % % % % % % % % % % % % % % % % % % % % % % % % %  

\subsection{The vector space of integral field strengths}

% % % % % % % % % % % % % % % % % % % % % % % % % % % % % % % % % % % % % % 

The DSZ quantization condition of local supergravity is implemented
using a full symplectic lattice. Similarly, we implement the DSZ
quantization of the universal bosonic sector defined by a
scalar-electromagnetic bundle $\Phi$ in terms of a smoothly varying
fiber-wise choice of full symplectic lattices for the underlying
duality bundle $\Delta$, as proposed in \cite{Lazaroiu:2016iav}.
Recall that a full lattice $\Lambda$ in a $2n$-dimensional symplectic
vector space $(V,\omega)$ is called {\em symplectic} if the restriction
of the symplectic pairing $\omega$ to $\Lambda$ takes integer values.
Such lattices are characterized up to symplectomorphism by their {\em type}
$\frt\in \Div^n$ (see \cite[Proposition 1.1]{Debarre}), where:
\be
\Div^n\eqdef \{\frt=(t_1,\ldots, t_n)\in \Z_{>0}^n~\vert~t_1 | t_2 | \ldots | t_n \}~~.
\ee
Any full symplectic lattice of type $\frt\in \Div^n$ in $(V,\omega)$
admits a basis $\lambda_1,\ldots, \lambda_n,\mu_1,\ldots, \mu_n$ such
that:
\be
\omega(\lambda_i,\mu_j)=t_j\delta_{ij}~~,~~\omega(\lambda_i,\lambda_j)=\omega(\mu_i,\mu_j)=0~~\forall i,j=1,\ldots,n~~
\ee
and any element $\frt\in \Div^n$ is realized as the type of some full
symplectic lattice. The symplectic lattice $\Lambda$ is called {\em
principal} if $\frt=\delta_n\eqdef (1,\ldots, 1)$. The {\em modified
Siegel modular group} of type $\frt\in \Div^n$ is the subgroup
$\Sp_\frt(2n,\Z)\subset \Sp(2n,\R)\simeq \Aut(V,\omega)$ consisting
of those symplectic transformations which preserve the standard
lattice of type $\frt$ in $\mathbb{R}^{2n}$. We have $\Sp_{\delta_n}(2n,\Z)=\Sp(2n,\Z)$ 
and $\Sp_\frt(2n,\Z) \subset \Sp(2n,\Z)$ for all $\frt\in \Div^n$.

\begin{definition}
Let $\Delta=(\cS,\omega,\cD)$ be a duality bundle on the scalar bundle
$(\pi,\cH,\cG)$ with submersion $\pi:X\rightarrow M$. A {\em Dirac
system} on $\Delta$ is a smooth fiber sub-bundle $j\colon
\cL\hookrightarrow \cS$ of full symplectic lattices in $(\cS,\omega)$
which is preserved by the parallel transport $T$ of the flat
connection $\cD$ in the sense that we have:
\begin{equation*}
T_{\gamma}(\cL\vert_{\gamma(0)}) = \cL\vert_{\gamma(1)}
\end{equation*} for any piece-wise smooth path $\gamma\in \cP(X)$. The
common type of these fiberwise symplectic lattices is called the {\em
type} of $\cL$. A pair:
\begin{equation*}
\bDelta  \eqdef (\Delta, \cL)\, , 
\end{equation*}
consisting of a duality bundle $\Delta$ and a choice of Dirac system
$\cL$ for $\Delta$ is called an {\em integral duality bundle}.
\end{definition}

\noindent For every $x\in X$, the fiber $(\cS_x, \omega_x,\cL_x)$ of
an integral duality bundle $\bDelta = (\Delta, \cL)$ with
$\Delta=(\cS,\omega,\cD)$ is an \emph{integral symplectic space} as
defined in \cite[Appendix B]{LazaroiuShahbaziAGT}. All fibers of
$\bDelta$ are isomorphic as integral symplectic spaces, hence their
type does not depend on $x\in X$ since we assume that $X$ is
connected.

\begin{remark}
The existence of a Dirac system is obstructed. A duality bundle
$\Delta=(\cS,\omega,\cD)$ of rank $2n_v$ admits a Dirac system of type
$\frt\in \Div^n$ if and only if the structure group of $\cS$ can be
reduced from $\Sp(2n,\R)$ to $\Sp_{\mathfrak{t}}(2n_v,\mathbb{Z})$. We
say that $\Delta$ is \emph{semiclassical} if it admits a Dirac system.
\end{remark}

\begin{definition}
Let $\bDelta_1 = (\Delta_1 ,\cL_1)$ and $\bDelta_2 = (\Delta_2
,\cL_2)$ be two integral duality bundles on $M$. A {\em morphism} of
integral duality bundles from $\bDelta_1$ to $\bDelta_2$ is a morphism
of duality bundles $f\colon \Delta_1 \to \Delta_2$
such that $f(\cL_1) = \cL_2$. 
\end{definition}
 
\begin{remark}
Given a Dirac system $\cL$ for a duality bundle
$\Delta=(\cS,\omega,\cD)$, let $\fS_{\bDelta}\eqdef\cC(\cL)$ be the
locally-constant sheaf of continuous sections of the discrete fiber
bundle $\cL$. This is a subsheaf of the sheaf $\fS_{\bDelta}$ of flat
sections of $(\cS,\cD)$ whose stalk at $x\in X$ identifies with the
symplectic lattice $\cL_x\subset \cS_x$.
\end{remark}

\noindent For every scalar section $s\in \Gamma(\pi)$, let
$\fS^{s}_{\bDelta}\eqdef s^\ast(\fS_{\bDelta})$ be the
locally constant sheaf on $M$ obtained as the pullback of
$\fS_\bDelta$ through $s$.  The sheaf cohomology groups
$H^k(M,\fS^{s}_{\bDelta})$ are naturally isomorphic with the 
cohomology  groups $H^k(M,\cL^s)$ of $M$ with coefficients in 
the local system $\cL^s=s^\ast(\cL)$ and play a crucial role 
in what follows.

\begin{definition}
An \emph{integral
electromagnetic bundle} is a pair:
\begin{equation*}
\bXi \eqdef (\Xi,\cL)\, ,
\end{equation*}
where $\Xi$ is an electromagnetic bundle on $M$ and $\cL$ is Dirac
system for the duality bundle of $\Xi$. An \emph{integral
scalar-electromagnetic bundle} on $M$ is a pair:
\begin{equation*}
\bPhi \eqdef (\Phi,\cL)\, ,
\end{equation*}
where $\Phi$ is a scalar-electromagnetic bundle on $M$ and $\cL$ is a
Dirac system for the duality bundle of $\Phi$.
\end{definition}

\noindent
Given an integral electromagnetic bundle $\bXi=(\Xi,\cL)$ with
integral duality structure $\bDelta=(\cS,\omega,\cD,\cL)$ over a
submersion $\pi:X\rightarrow M$, the quotient:
\begin{equation*}
\cX_{\bDelta} \eqdef \cS/\cL
\end{equation*}
is a flat fibration over $X$ by symplectic torus groups. The taming
$\cJ$ of $\Delta$ makes this into a fibration by polarized Abelian
varieties which however need not be flat since $\cJ$ is not flat
unless the underlying electromagnetic bundle $\Xi$ is unitary.  The
sheaf $\fS_{\cX_\bDelta}$ of smooth flat sections of $\cX_{\bDelta}$ fits
into a short exact sequence of sheaves of Abelian groups defined on
$X$:
\begin{equation*}
0\to \fS_{\bDelta} \xrightarrow{j} \fS_{\Delta} \to \fS_{\cX_{\bDelta}} \to 0\, .
\end{equation*}
which pulls-back to a short exact sequence of sheaves of Abelian
groups defined on $M$:
\begin{equation*} 
0\to \fS^{s}_{\bDelta} \xrightarrow{j^{s}} \fS^{s}_{\Delta} \to \fS^s_{\cX_\bDelta} \to 0\, .
\end{equation*}
The latter induces a long exact sequence in sheaf cohomology, of which
we are interested in the following portion:
\begin{equation*}
\ldots \rightarrow H^1(M,\fS^{s}_{\cX_{\bDelta}})\to H^2(M,\fS^{s}_{\bDelta}) \xrightarrow{j^{s}_{\ast}} H^2(M,\fS^{s}_{\Delta})\to  H^2(M,\fS^s_{\cX_{\bDelta}})\rightarrow \ldots\, .
\end{equation*}

\begin{definition}
The {\em charge lattice} of the integral scalar-electromagnetic
structure $\bXi=(\Xi,\cL)$ relative to the scalar section $s\in
\Gamma(\pi)$ is the lattice:
\be
L_{\bXi}^s\eqdef j^{s}_{\ast}(H^2(M,\fS^{s}_{\bDelta})) \subset H^2(M,\fS^{s}_{\Delta})\, ,
\ee
Elements of this lattice are called \emph{integral cohomology
classes}.
\end{definition}

\noindent It can be shown that $L_{\bXi}^s$ is a full lattice in
$H^2(M,\fS^{s}_{\Delta})$ (see \cite[Proposition
2.24]{LazaroiuShahbaziAGT}). Given an integral scalar-electromagnetic
bundle $\bPhi$, we implement DSZ quantization by restricting the
configuration space $\Conf(\Phi)$ to a subset $\Conf(\bPhi)\subset
\Conf(\Phi)$ obtained by imposing an \emph{integrality condition} on
the elements of $\Conf(\Phi)$. This is the appropriate implementation 
of the DSZ quantization condition in our situation. 

\begin{definition}
Let $\bPhi$ be an integral scalar-electromagnetic bundle. The {\em
integral configuration space} $\Conf(\bPhi)$ of defined by $\bPhi$ is the
set:
\begin{equation*}
\Conf(\bPhi) \eqdef \left\{ (g,s,\cF) \in \Conf(M,\Phi) \,\, \vert \,\, [\cF]\in 2\pi L^s_\Xi\right\}\, .
\end{equation*}
The {\em integral solution space} $\Sol(\bPhi)\subset \Conf(\Phi)$
defined by $\bPhi$ is the set:
\be
\Sol(\bPhi)\eqdef \Sol(\Phi)\cap \Conf(\bPhi)~~.
\ee
\end{definition}

\noindent For further reference we introduce a refinement of the
previous definition.

\begin{definition} Let $\bPhi$ be an integral scalar-electromagnetic
bundle and let $\mathfrak{V}\in H^2(X,\fS_{\bDelta})$. The {\em
framed integral configuration space} $\Conf(\frV,\bPhi)$ with
\emph{framing} $\frV$ of the classical geometric supergravity theory
associated to $\bPhi$ is defined as the following subset of
$\Conf(\bPhi)$:
\begin{equation*}
\Conf(\frV,\bPhi) \eqdef \left\{ (g,s,\cF) \in \Conf(\Phi) \,\, \vert \,\, [\cF] = 2\pi j^{s}_{\ast}(\frV^{s}) \right\}\, ,
\end{equation*}
The {\em framed integral solution space}
$\Sol(\frV,\bPhi)\subset \Conf(\frV,\Phi)$ is the set:
\be
\Sol(\frV,\bPhi)\eqdef \Sol(\Phi)\cap \Conf(\frV,\bPhi)~~.
\ee
\end{definition}

\begin{definition}
The \emph{arithmetic U-duality group} of an integral scalar-electromagnetic
structure $\bPhi=(\Phi,\cL)$ is the subgroup of $\U(\Phi)$ defined through:  
\begin{equation*}
\U(\bPhi) \eqdef \left\{ u\in \U(\Phi) \,\, \vert\,\, u(\cL) =\cL\right\}\, .
\end{equation*}
Similarly $\U_o(\bPhi) \eqdef \left\{ u\in \U_o(\Phi) \,\, \vert\,\, u(\cL) =\cL\right\}$.
\end{definition}

\noindent The arithmetic U-duality group $\U(\bPhi)$ is the global
counterpart of the arithmetic U-duality group of local supergravity
normally considered in the physics literature \cite{Hull:1994ys,Mizoguchi:1999fu}. 
We remark that the supergravity literature seems to have considered thus far only holonomy
trivial Dirac systems $\cL$ of principal type, though there is a priori no physical 
or mathematical reason to make that assumption. We will consider some simple examples
of arithmetic U-duality groups in Section \ref{sec:globalautgroup}. More elaborated 
examples will be considered in a separate publication.

% % % % % % % % % % % % % % % % % % % % % % % % % % % % % % % % % % % % % % 

\section{The DSZ quantization of 4d bosonic supergravity}

% % % % % % % % % % % % % % % % % % % % % % % % % % % % % % % % % % % % % % 

In this section we describe the geometric and gauge-theoretic formulation 
of the universal bosonic sector of 4d supergravity implied by the DSZ
quantization condition.  This formulation can be constructed through a
step-by-step process as done in \cite{LazaroiuShahbaziAGT} for Abelian gauge
theory. Instead of going through the details of that process, which are similar
to those in \cite{LazaroiuShahbaziAGT}, we give the description of the theory in 
its final form, verifying then that it satisfies the appropriate DSZ quantization
(see Theorem \ref{thm:DQSugra}). The key ingredient occurring in the construction 
is a \emph{Siegel bundle}, a special kind of principal bundle which was introduced 
and discussed in detail in \cite[Section 3]{LazaroiuShahbaziAGT} and forms a 
particular case of the more general notion of principal bundle with weakly-Abelian 
structure group studied in \cite{wa}, to which we refer the reader for background 
and further details. Given $\frt\in \Div^{n_v}$, we define the following disconnected 
Lie group:
\be
\Aff_\frt\eqdef \U(1)^{2n_v}\rtimes \Sp_\frt(2n,\Z)~~,
\ee
where $\U(1)^{2n_v}\simeq \R^{2n_v}/\Z^{2n_v}$ is an affine 
torus group of dimension $2n_v$. The group $\Aff_\frt$
identifies with the set $\U(1)^{2n_v}\times \Sp_\frt(2n_v,\Z)$
equipped with the multiplication rule:
\begin{equation*}
(a_1,\gamma_1)\,(a_2,\gamma_2 ) =(a _1+ \gamma_1 a_2,
\gamma_1\gamma_2)\, , \quad \forall \,\, a_1,a_2\in \U(1)^{2n_v}\, ,
\quad \forall\,\, \gamma_1,\gamma_2\in \Sp_\frt(2n_v,\Z)\, .
\end{equation*}
The modified Siegel modular group $\Sp_\frt(2n_v,\Z)$
coincides with the automorphism group of the standard integral
symplectic space $(\mathbb{R}^{2n_v},\omega_{n_v},\wedge_{\frt})$ of type
$\frt$, where $\omega_{n_v}$ is the standard symplectic form on
$\mathbb{R}^{2n_v}$ and:
\be
\Lambda_{\frt} \eqdef \Z^{n_v}\oplus \oplus_{i=1}^{n_v}{t_i \Z}\subset \mathbb{R}^{2n_v}
\ee
is the standard symplectic lattice of type $\frt$ (see \cite[Appendix
B]{LazaroiuShahbaziAGT}). Moreover, $\Aff_\frt$ coincides with the
group of affine symplectomorphisms of the $2n_v$-dimensional
symplectic torus $(\R^{2n_v}/\Lambda_\frt, \Omega_\frt)$, whose
symplectic form $\Omega_\frt$ is induced by $\omega_{n_v}$. The
connected component of the identity in $\Aff_{\frt}$ is the torus
group $\U(1)^{2n_v}$, while the group of components of $\Aff_\frt$ is
the discrete group $\Sp_\frt(2n_v,\Z)$, which is infinite and
non-Abelian when $n_v>0$.

\begin{definition}
A {\em Siegel bundle} $P_{\frt}$ of rank $n_v$ and type $\frt\in
\mathrm{Div}^{n_v}$ on $X$ is a principal bundle defined on $X$ with
structure group $\Aff_\frt$. A {\em based isomorphism of Siegel
bundles} is a based isomorphism of principal bundles.
\end{definition}

\noindent Let $(\pi, \cH,\cG)$ be a scalar bundle with submersion
$\pi:X\rightarrow M$ and consider a Siegel bundle $P_{\frt}$ of 
rank $n_v$ and type $\frt\in \Div^{n_v}$ over $X$. As shown in
\cite{LazaroiuShahbaziAGT}, the adjoint bundle of $P_{\frt}$ admits a 
natural structure of integral duality bundle of type $\frt$ which we 
denote by $\bDelta(P_{\frt})$. By definition, a vertical taming $\cJ$
of $P_{\frt}$ is a vertical taming of $\bDelta(P_{\frt})$. Given such 
a taming, the  pair $(P_{\frt},\cJ)$ is called a (positively) polarized 
Siegel  bundle (cf. \cite{LazaroiuShahbaziAGT,wa}). The integral electromagnetic
bundle $\bXi(P_{\frt},\cJ)$ determined by $(P_{\frt},\cJ)$ is defined through:
\begin{equation*}
\bXi(P_{\frt},\cJ) \eqdef (\bDelta(P_{\frt}),\cJ)\, .
\end{equation*}
 
\noindent Given a scalar section
$s\in\Gamma(\pi)$, we denote by $P^{s}_{\frt}$ the pullback of
$P_{\frt}$ by $s$, which becomes a Siegel bundle over $M$. Similarly, 
we denote by $\bDelta(P^{s}_{\frt})$ and $\bXi(P^{s}_{\frt},\cJ^{s})$
the integral duality and integral electromagnetic bundles defined by
$P^{s}$ and $\cJ^{s}$, which coincide with the
$s$-pullbacks of the corresponding bundles defined by $P$ and
$\cJ$ on $X$. When necessary, we will write:
\begin{equation*}
\bDelta(P^{s}_{\frt}) = (\cS^{s},\omega^{s},\cD^{s})\, .
\end{equation*}
Let $\Conn(P^{s}_{\frt})$ be the affine space of connections
on $P^{s}_{\frt}$. Elements of this space are invariant one-forms
on $P_{\frt}$ mapping the fundamental vector fields of $P_{\frt}$ to 
their generators in $\aff_\frt$, where $\aff_\frt\simeq \R^{2n_v}$ is the 
Lie algebra of $\Aff_\frt$, which has trivial Lie bracket. The adjoint 
curvature of a connection $\cA\in \Conn(P^{s}_\frt)$ will be denoted 
by $\cF_\cA\in \Omega^2(M,\cS^s)$. This bundle-valued 2-form is
$\dd_{\cD^s}$-closed by the Bianchi identity since (by the results
of \cite{LazaroiuShahbaziAGT,wa}) all connections on
$P^s$ induce the same connection on $\cS^s$, which
coincides with the connection induced by $\cD^s$ on the adjoint
bundle of $P^s_{\frt}$. Thus:
\be
\dd_\cD^s \cF_\cA = 0\, .
\ee

\begin{definition}
A {\em scalar-Siegel bundle} of rank $n_v$ and type $\frt\in
\Div^{n_v}$ over $M$ is a system $\zeta\eqdef (\pi,\cH,\cG, P_\frt)$, 
where $(\pi,\cH,\cG)$ is a scalar bundle over $M$ with submersion
$\pi:X\rightarrow M$ and $P_\frt$ is a Siegel bundle of rank $n_v$ and 
type $\frt\in \Div^{n_v}$ defined on $X$. Given a vertical taming $\cJ$ of
$\Delta(P_\frt)$, the system $\bzeta\eqdef (\Psi,\cJ)$ is called a {\em
polarized scalar-Siegel bundle} of rank $n_v$ and type $\frt$ over
$M$.
\end{definition}

\begin{definition}
\label{def:confdqsugra}
Let $\bzeta\eqdef (\pi,\cH,\cG, P_{\frt},\cJ)$ be a polarized scalar-Siegel
bundle over $M$. The {\em configuration space} of the bosonic supergravity 
defined by $\bzeta$ is the set:
\begin{equation*}
\cConf(\bzeta) = \left\{ (g,s,\cA)  \,\, \vert \,\, g\in \Lor(M) \, , \,\, s\in \Gamma(\pi)\, , \,\, \cA  \in \Conn(P^{s}_\frt)   \right\}\, .
\end{equation*}
The {\em universal bosonic sector} of four-dimensional supergravity
determined on $M$ by $\bzeta$ is defined through the following system
of partial differential equations for triples $(g,s,\cA)\in
\cConf(\bzeta)$:
\begin{itemize}
\item The Einstein equations:
\begin{equation}
\label{eq:GlobalEinsteinDQ}
\mathrm{Ric}^g - \frac{g}{2} \mathrm{R}^g = \frac{1}{2} \mathrm{Tr}_g(s^{\ast}_{\cC}\cG)\, g - s^{\ast}_{\cC}\cG   + 2 \cF_{\cA} \oslash_{Q^{s}} \cF_{\cA}\, .
\end{equation}
\item The scalar equations:
\begin{equation}
\label{eq:GlobalScalarDQ}
\mathrm{Tr}_g(\nabla^{\Phi(g,s)}  \dd^{\cC}s) = \frac{1}{2} (\ast \cF_{\cA} , (\Psi^s)^{\sharp_{\cG}}\cF_{\cA})_{g,Q^{s}}\, .
\end{equation}
\item The Maxwell equations:
\begin{equation}
\label{eq:GlobalMaxwellDQ}
\star_{g,\cJ^{s}}\cF_{\cA} = \cF_{\cA}\, ,
\end{equation}
whose set of solutions we denote by $\cSol(\bzeta)\subset \cConf(\bzeta)$.
\end{itemize}
\end{definition}

\noindent Connections $\cA$ satisfying equation \eqref{eq:GlobalMaxwellDQ} will
be called {\em polarized self-dual}, following the terminology introduced in
\cite{LazaroiuShahbaziAGT} in the context of Abelian gauge theory.

\begin{remark}
The Maxwell equations of the bosonic gauge sector of local
supergravity are given by a system of second-order partial
differential equations for a number $n_v$ of \emph{electromagnetic}
local gauge potentials whose curvatures satisfy a generalization of 
the Maxwell equations. This is locally equivalent with the description 
given by the first order global equation \eqref{eq:GlobalMaxwellDQ}, 
which reduces locally to a system of first-order partial differential 
equations for $2n_v$ local gauge fields, both \emph{electric} and 
\emph{magnetic} (considered up to gauge transformations of the 
principal bundle $P^s_\frt$).
\end{remark}

\noindent
The Bianchi identity and polarized self-duality condition imply
that the gauge potential of any solution $(g,s,\cA)\in
\Sol(\bzeta)$ automatically satisfies the following second order
differential equation of Yang-Mills type:
\begin{equation*}
\dd_{\cD^s} \star_{g,\cJ}\cF_{\cA} = 0\, .
\end{equation*}
These differ from the usual Yang-Mills equations since $\cF_\cA$
involves both electric and magnetic degrees of freedom while the
equations themselves involve the pulled-back taming $\cJ^s$.

\begin{thm}
\label{thm:DQSugra}
Let $\bPhi = (\pi,\cH,\cG,\bDelta,\cJ)$ be an integral 
scalar-electromagnetic bundle of type $\frt$. For every framed integral 
configuration space $\Conf(\frV,\Phi)$ there exists a vertically polarized 
Siegel bundle $(P_\frt,\cJ)$ on $(\pi,\cH,\cG)$ such that $\bDelta = \bDelta(P_\frt)$
and the twisted Chern class\footnote{See \cite{LazaroiuShahbaziAGT,wa} for 
its precise definition.} $c(P_\frt)$ of $P_{\frt}$ satisfies $c(P_\frt)=\frV$. 
Moreover, the map:
\begin{equation*}
\cSol(\pi,\cH,\cG,P_\frt,\cJ)\to \Sol(\frV,\bPhi) \, , \qquad (g,s,\cA) \mapsto  (g,s,\cF_{\cA})\, ,
\end{equation*}
is surjective.
\end{thm}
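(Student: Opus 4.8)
The plan is to exploit the classification of Siegel bundles by their twisted Chern class, established in \cite{LazaroiuShahbaziAGT,wa}, and then to reduce the Maxwell sector of the geometric supergravity to the polarized self-duality condition on connections.

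\textbf{Step 1: Constructing the Siegel bundle.} First I would invoke the results of \cite[Section 3]{LazaroiuShahbaziAGT} (and the more general framework of \cite{wa}): for a semiclassical integral duality bundle $\bDelta=(\cS,\omega,\cD,\cL)$ over $X$ of type $\frt$, the adjoint-bundle construction is a quasi-inverse to $P_\frt\mapsto \bDelta(P_\frt)$ on the relevant groupoids, with the residual freedom parametrized precisely by the twisted Chern class $c(P_\frt)\in H^2(X,\fS_\bDelta)$. Given the framing $\frV\in H^2(X,\fS_\bDelta)$, this produces a Siegel bundle $P_\frt$ on $X$ with $\bDelta(P_\frt)\cong\bDelta$ as integral duality bundles and $c(P_\frt)=\frV$. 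Since a vertical taming $\cJ$ of $\bDelta$ is by definition a vertical taming of $\bDelta(P_\frt)$ under this identification, the pair $(P_\frt,\cJ)$ is a vertically polarized Siegel bundle on $(\pi,\cH,\cG)$, which is the first assertion.

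\textbf{Step 2: The curvature map lands in the framed integral solution space.} Given $s\in\Gamma(\pi)$ and $\cA\in\Conn(P^s_\frt)$, the adjoint curvature $\cF_\cA\in\Omega^2(M,\cS^s)$ is $\dd_{\cD^s}$-closed by the Bianchi identity, as recalled before the theorem, so $(g,s,\cF_\cA)\in\Conf(\Phi)$. The key point is that the twisted Chern class controls the de Rham cohomology class: by the results of \cite{LazaroiuShahbaziAGT,wa}, the $\dd_{\cD^s}$-cohomology class of $\cF_\cA$ equals $2\pi j^s_\ast(\frV^s)$ under the de Rham isomorphism $H^2(M,\Delta^s)\simeq H^2(M,\fS^s_\Delta)$, independently of the choice of connection $\cA$ (since all connections on $P^s_\frt$ differ by an $\aff_\frt$-valued one-form, hence induce cohomologous curvatures, the Lie algebra being abelian). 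Thus $[\cF_\cA]=2\pi j^s_\ast(\frV^s)$, which is exactly the framing condition, so $(g,s,\cF_\cA)\in\Conf(\frV,\bPhi)$. Comparing \eqref{eq:GlobalEinsteinDQ}--\eqref{eq:GlobalMaxwellDQ} with \eqref{eq:GlobalEinstein}--\eqref{eq:GlobalMaxwell} term by term, and using that $\Psi^s$, $Q^s$, $\oslash_{Q^s}$, $\star_{g,\cJ^s}$ are the same operators in both formulations, a solution $(g,s,\cA)\in\cSol(\bzeta)$ maps to $(g,s,\cF_\cA)\in\Sol(\Phi)\cap\Conf(\frV,\bPhi)=\Sol(\frV,\bPhi)$. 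So the map is well-defined.

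\textbf{Step 3: Surjectivity.} Given $(g,s,\cF)\in\Sol(\frV,\bPhi)$, by definition $[\cF]=2\pi j^s_\ast(\frV^s)$ in $H^2(M,\Delta^s)$. I would then lift $\cF$ to a connection: since $\cF$ and $\cF_\cA$ (for any fixed $\cA_0\in\Conn(P^s_\frt)$, which is nonempty as $P^s_\frt$ is a principal bundle over a manifold) represent the same $\dd_{\cD^s}$-cohomology class $2\pi j^s_\ast(\frV^s)$, their difference $\cF-\cF_{\cA_0}$ is $\dd_{\cD^s}$-exact, say $\cF-\cF_{\cA_0}=\dd_{\cD^s}\alpha$ for some $\alpha\in\Omega^1(M,\cS^s)$; then $\cA\eqdef\cA_0+\alpha$ (using that $\cS^s\cong\ad(P^s_\frt)$ and $\aff_\frt\cong\R^{2n_v}$, so an $\cS^s$-valued one-form is an $\ad$-valued one-form that can be added to a connection) satisfies $\cF_\cA=\cF_{\cA_0}+\dd_{\cD^s}\alpha=\cF$. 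Finally, since $(g,s,\cF)$ solves the Maxwell equation $\star_{g,\cJ^s}\cF=\cF$ and the Einstein and scalar equations depend on $\cA$ only through $\cF_\cA=\cF$, the triple $(g,s,\cA)$ lies in $\cSol(\bzeta)$ and maps to $(g,s,\cF)$. This proves surjectivity.

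\textbf{Main obstacle.} The delicate point is Step 2--3's claim that the twisted Chern class of $P_\frt$ pins down the $\dd_{\cD^s}$-cohomology class of the adjoint curvature to be exactly $2\pi j^s_\ast(\frV^s)$, and that every class in the charge lattice is so realized; this is where the precise compatibility between the Chern--Weil-type construction of \cite{wa} and the sheaf-theoretic charge lattice of \cite[Proposition 2.24]{LazaroiuShahbaziAGT} must be used carefully, keeping track of the $2\pi$ normalization and the pullback by $s$. Everything else is either a direct translation of the equations of motion or a routine application of the exactness of $\dd_{\cD^s}$ on cohomologous forms. I would cite \cite{LazaroiuShahbaziAGT,wa} for the normalization statement rather than re-derive it.
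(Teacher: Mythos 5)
Your proposal is correct and follows the same overall strategy as the paper: invoke the classification of Siegel bundles by their twisted Chern class to construct $P_\frt$ with $\bDelta(P_\frt)\cong\bDelta$ and $c(P_\frt)=\frV$, then use the identity $[\cF_\cA]_{\cD^s}=2\pi j^s_\ast(c(P^s_\frt))$ together with the connection-independence of the curvature class to show the map lands in $\Sol(\frV,\bPhi)$. The one place where you genuinely diverge is the surjectivity step. The paper argues locally: it takes a good open cover, writes $\cF\vert_{U_a}=\dd\cA_a$ on each simply connected patch after trivializing $\Delta$, and asserts that the family $\{\cA_a\}$ glues to a connection with curvature $\cF$ (a step it leaves to the reader). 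You instead argue globally: fix a reference connection $\cA_0$, observe that $\cF$ and $\cF_{\cA_0}$ both represent $2\pi j^s_\ast(\frV^s)$, hence differ by $\dd_{\cD^s}\alpha$ for some $\alpha\in\Omega^1(M,\cS^s)$, and set $\cA=\cA_0+\alpha$, using that $\aff_\frt$ is abelian so the curvature shifts exactly by $\dd_{\cD^s}\alpha$. Your version is cleaner and actually more complete than the paper's sketch, since it avoids the unproven gluing claim and reduces everything to the affine structure of $\Conn(P^s_\frt)$ over $\Omega^1(M,\ad(P^s_\frt))$, a correspondence the paper itself already uses in the other direction earlier in the proof. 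Both arguments rest on the same key external input, namely the normalization $[\cF_\cA]_{\cD^s}=2\pi j^s_\ast(c(P^s_\frt))$ from \cite{LazaroiuShahbaziAGT,wa}, which you correctly identify as the delicate point.
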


\begin{proof}
Given $\bDelta$ and $\frV$, it follows from the results of \cite{BaragliaI,BaragliaII} 
(see also \cite{wa}) that there exists a Siegel bundle $P_{\frt}$ of type 
$\frt$  (unique up to isomorphism) whose twisted Chern class $c(P_{\frt})$ 
equals $\frV$ and whose adjoint bundle is isomorphic to $\bDelta$ as an integral 
duality bundle. The vertical taming $\cJ$ in $\bPhi$ makes $P_{\frt}$ into a 
polarized  Siegel bundle. On the other hand, the curvature of any connection 
$\cA\in \Conn(P^s_{\frt})$ defines a $\dd_{\cD^s}$-cohomology class
$[\cF_{\cA}]_{\cD^s}$ in $H^2(M,\fS_{\Delta^s})$ since, as remarked earlier:
\begin{equation*}
\dd_{\cD^{s}}\cF_\cA = \dd_{\cA}\cF_{\cA} = 0\, .
\end{equation*}

\noindent
Given any other connection $\cA^{\prime}$ on $P_{\frt}$ we have:
\begin{equation*}
\cA^{\prime} = \cA + \bar{\tau}\, ,
\end{equation*}

\noindent
for a unique horizontal and invariant one-form $\bar{\tau} \in \Omega^1(P_{\frt}, 
\mathfrak{aff}_{\frt})$. Therefore, the curvatures of $\cF_{\cA}$ and 
$\cF_{\cA^{\prime}}$ are related as follows:
\begin{equation*}
\cF_{\cA^{\prime}} = \cF_{\cA} + \dd_{\cD^{s}}\tau \in \Omega^2(M,\cS^{s})\, ,
\end{equation*}

\noindent
where $\tau \in \Omega^1(M,\cS^{s})$ is uniquely determined by 
$\bar{\tau} \in \Omega^1(P_{\frt}, \mathfrak{aff}_{\frt})$. This implies that 
the cohomology class $[\cF_{\cA}]_{\cD^s}\in H^2(M,\fS_{\Delta}^s)$ 
does not depend on the connection $\cA$. A similar argument shows that the cohomology 
class $[\cF_{\cA}]_{\cD^s}$ is invariant under automorphisms of $P_{\frt}$ and
therefore only depends on the isomorphism class of the latter. This is further
elaborated in \cite{wa} to show that $[\cF_{\cA}]_{\cD^s}$ is equal to the
\emph{real} twisted Chern class of $P_{\frt}$ as follows:
\begin{equation*}
[\cF_{\cA}]_{\cD^s} = 2\pi j^s_{\ast}(c(P^s_{\frt})) \in L^s_\bDelta\, .
\end{equation*}

\noindent
Since $c(P_{\frt}) = \frV$ by construction, we immediately conclude that:
\begin{equation*}
[\cF_{\cA}]_{\cD^s} = 2\pi j^s_{\ast}(\frV^{s}) \in L^s_\bDelta\, .
\end{equation*}

\noindent
Hence, $(g,s,\cF_{\cA})$ belongs to $\Sol(\frV,\bPhi)$ for all
$(g,s,\cA)\in \Conf(\pi,\cH,\cG,P_{\frt},\cJ)$. Furthermore, every
element in $\Sol(\frV,\bPhi)$ is of the form $(g,s,\cF_{\cA})$ for some
$(g,s,\cA)\in \cSol(\pi,\cH,\cG,P_\frt,\cJ)$. An explicit way to prove 
this is to use a good open cover $M\subset \left\{ U_a \right\}_{a\in I}$ of
$M$. Then, given $(g,s,\cF)\in\Sol(\frV,\bPhi)$, the restriction:
\begin{equation*}
\cF_a \eqdef \cF\vert_{U_a} = \dd \cA_a\, , \qquad \cA_a \in \Omega^1(U_a,\mathbb{R}^{2n_v})\qquad a \in I\, , 
\end{equation*}

\noindent
is $\dd_{\cD^{s}}$-exact and hence $\dd$-exact, since we can
trivialize $\Delta$ over $U_a$ as the latter is simply connected. 
The family of one-forms $\left\{\cA_a \right\}$ taking values 
in $\mathbb{R}^{2n_v}$ can be shown to define a connection on 
$P^s_{\frt}$ whose curvature is precisely $\cF$ and hence
we conclude.
\end{proof}

\noindent The previous theorem shows that Definition
\ref{def:confdqsugra} realizes geometrically the DSZ quantization of
the universal bosonic supergravity sector defined by $\Phi$ since it 
shows that, given a Dirac system for the duality structure of $\Phi$, every
element in the solution space $\Sol(\Phi,\cL)$ can be realized through
a Lorentz metric on $M$, a section of $\pi$ and a gauge potential
$\cA\in \Conn(P^s_{\frt})$ for some Siegel bundle $P^s_{\frt}$ on $X$. The 
latter is the novel geometric object attached to the DSZ quantization condition.

% % % % % % % % % % % % % % % % % % % % % % % % % % % % % % % % % % % % % % 
% % % % % % % % % % % % % % % % % % % % % % % % % % % % % % % % % % % % % % 

\section{The electromagnetic U-duality group}
\label{sec:globalautgroup}

% % % % % % % % % % % % % % % % % % % % % % % % % % % % % % % % % % % % % % 
% % % % % % % % % % % % % % % % % % % % % % % % % % % % % % % % % % % % % %

In this section we investigate the gauge U-duality group of the DSZ
quantization of bosonic supergravity, which yields a natural extension
of its arithmetic U-duality group and provides the geometric interpretation 
of U-duality transformations as gauge transformations.

Fix a vertically polarized Siegel bundle $(P_{\frt},\cJ)$ on the total 
space $X$ of a scalar bundle $(\pi,\cH,\cG)$ and let $\Aut(P_{\frt})$ be 
the automorphism group of $P_{\frt}$. For every $u\in \Aut(P_{\frt})$, 
denote by $\frad_u\colon \bDelta(P_{\frt})\to \bDelta(P_{\frt})$ the 
automorphism of the integral duality structure $\bDelta(P_{\frt})$ defined 
by $u$. Let $\Aut_{\pi}(P_{\frt})\subset \Aut(P_{\frt})$ be the subgroup 
formed by all elements of $\Aut(P_{\frt})$ which cover based automorphisms 
of the fiber bundle $\pi$, that is:
\begin{equation*}
\Aut_{\pi}(P_{\frt}) \eqdef\left\{ u\in \Aut(P_{\frt})\,\, \vert\,\, f_u\in \Aut_b(\pi)\right\}=\left\{ u\in \Aut(P_{\frt})\,\, \vert\,\, \pi\circ f_u=\pi\right\}\, .
\end{equation*}
We have the a short exact sequence of groups:
\begin{equation*}
1 \to \Aut_b(P_{\frt}) \to \Aut_{\pi}(P_{\frt}) \to \Aut^0_{b}(\pi)\to 1\, ,
\end{equation*}
where $\Aut^0_{b}(\pi)$ is the subgroup of $\Aut_{b}(\pi)$ formed by
those based automorphisms of $\pi$ that can be covered by elements of
$\Aut(P_{\frt})$.

\begin{definition}
\label{def:Uduality}
Let $(P_{\frt},\cJ)$ be a vertically polarized Siegel bundle over 
the scalar-bundle $(\pi,\cH,\cG)$. The {\bf gauge U-duality group}
$\U(\bzeta)$ of the polarized scalar-Siegel bundle
$\bzeta=(\pi,\cH,\cG,P_{\frt},\cJ)$ is the subgroup of $\Aut_{\pi}(P_{\frt})$
consisting of those elements which preserve the Ehresmann connection
$\cH$, the metric $\cG$ and the vertical taming $\cJ$:
\begin{equation*}
\U(\bzeta) \eqdef \left\{ u\in \Aut_{\pi}(P_{\frt})\,\,\vert\,\, \cH_{u}=\cH \, , \, \, \cG_{u}=\cG \, , \, \, \cJ_{u} = \cJ \right\}\, .
\end{equation*} 
\end{definition}

\noindent
Similarly, we denote by $\U_o(\bzeta)\subset \U(\bzeta)$ the subgroup of elements that 
cover automorphisms of $\pi$ isotopic to the identity. We will also refer to $\U_o(\bzeta)$
as the gauge U-duality group. Let $\Aut_b(P_{\frt},\cJ)$ be the subgroup of 
$\Aut_b(P_{\frt})$ consisting of those based automorphisms of $P_{\frt}$ which 
preserve $\cJ$. The gauge U-duality group fits into a short exact sequence:
\begin{equation*}
1 \to \Aut_b(P_{\frt},\cJ)\to \U(\bzeta)\to \Aut_b^0(\pi,\cH,\cG) \to 1\, ,
\end{equation*}
where $\Aut^0_b(\pi,\cH,\cG) \subset \Aut^0_b(\pi)$ is the subgroup
consisting of those based automorphisms of $\pi$ that can be covered
by elements of $\U(\bzeta)$ and preserve the Ehresmann 
connection $\cH$ and the metric $\cG$.

The main feature of the local U-duality group of a local supergravity
theory is that maps solutions to solutions and thus can be used as a
solution generating mechanism. This key property also holds for
$\U(\bzeta)$ as we show below. Let:
\begin{equation*}
(g,s,\cA)\in \cSol(\bzeta)~~.
\end{equation*}
Recall that $\cA\in \Conn(P^{s}_{\frt})$ is a connection on the 
pull-back of $P_{\frt}$ through $s\in \Gamma(\pi)$. An element 
$u\in  \Aut_{\pi}(P_{\frt})$ which covers $f_u\in\Aut_b(\pi)$ acts on 
$(g,s,\cA)$ through:
\begin{equation*}
u\cdot (g,s,\cA) = (g,f_u(s),  \cA_{u})\, , 
\end{equation*}
where $f_u(s)=f_u\circ s\in \Gamma(\pi)$ and $\cA_{u}$ is
the push-forward of $\cA$ by the based isomorphism of 
$P^s_{\frt}$ naturally associated to $u$ as follows:
\begin{equation*}
u_m \colon (P^{s}_{\frt})_m = (P_{\frt})_{s(m)} \to (P^{f_u(s)}_{\frt})_m = (P_{\frt})_{f_u(s(m))}\, , \quad
p\mapsto u_{s(m)}(p)\,
\end{equation*}
for all $m\in M$. Notice that $\cA_{u}$ is a connection on the bundle
$P^{f_u(s)}_{\frt}$, where the latter denotes the pull-back of $P_{\frt}$
by the section $f_u(s)\in \Gamma(s)$. Denote by:
\be
\bzeta_u=(\pi,\cH_u,\cG_u,P_{\frt},\cJ_u)
\ee
the push-forward of $\bzeta_u$ of $\bzeta$ by $u \in \Aut_{\pi}(P_{\frt})$.

\begin{cor}
Let $\bzeta=(\pi,\cH,\cG,P_{\frt},\cJ)$ be a polarized scalar-Siegel bundle
with submersion $\pi\colon X\to M$. Every element $u\in \Aut(P)$
defines a bijection of sets:
\begin{equation*}
\mathbb{A}_u \colon \cConf(\bzeta) \to \cConf(\bzeta_u)\, , \quad (g,s,\cA) \mapsto (g, f_u(s), \cA_{u})\, ,
\end{equation*}
which restricts to a bijection:
\begin{equation*}
\mathbb{A}_u \colon \cSol(\bzeta) \to \cSol(\bzeta_u)~~.
\end{equation*}
In particular, if $u\in \U(\bzeta)$ then $\mathbb{A}_u \colon \cSol(\bzeta) 
\to \cSol(\bzeta)$ preserves the solution space of the given supergravity theory.
\end{cor}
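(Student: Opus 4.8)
The plan is to mirror the proof of Theorem~\ref{thm:equivsolutions}, reducing the statement to it by passing from a gauge potential to its adjoint curvature. First I would check that $\mathbb{A}_u$ is a well-defined bijection already at the level of configuration spaces. Since $u$ covers $f_u\in\Aut_b(\pi)$ we have $\pi\circ f_u=\pi$, so $f_u(s)=f_u\circ s$ is again a section of $\pi$; moreover the based isomorphisms $u_m\colon (P^{s}_{\frt})_m\to (P^{f_u(s)}_{\frt})_m$ assemble into a based isomorphism $P^{s}_{\frt}\xrightarrow{\sim}P^{f_u(s)}_{\frt}$, along which $\cA$ pushes forward to a connection $\cA_{u}\in\Conn(P^{f_u(s)}_{\frt})$. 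Hence $\mathbb{A}_u$ maps $\cConf(\bzeta)$ into $\cConf(\bzeta_u)$, and functoriality of the push-forward of sections and of connections (which gives $f_{u^{-1}}=f_u^{-1}$ and $(\cA_u)_{u^{-1}}=\cA$) shows that $\mathbb{A}_{u^{-1}}$, built from $u^{-1}$ and $\bzeta_u$, is a two-sided inverse, so $\mathbb{A}_u\colon\cConf(\bzeta)\to\cConf(\bzeta_u)$ is a bijection.

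The crucial ingredient is the identity
\begin{equation*}
\cF_{\cA_u}=u\cdot\cF_{\cA}\quad\text{in}\quad\Omega^2(M,\cS^{f_u(s)})\, ,
\end{equation*}
where $u\cdot(-)$ is the action on $\cS^{s}$-valued two-forms through the induced automorphism $\frad_u$ of $\cS$, as in Theorem~\ref{thm:equivsolutions}. To prove it I would recall (from \cite{LazaroiuShahbaziAGT,wa}) that the adjoint curvature of a connection on a Siegel bundle is its curvature read through the adjoint representation on $\ad(P_{\frt})=\cS$, that curvature is natural under isomorphisms of principal bundles, and that $\frad_u$ is precisely the automorphism of $\bDelta(P_{\frt})$ induced by $u$; pulling back by $s$ and by $f_u(s)$ and chasing these identifications shows that the curvature of the push-forward connection $\cA_{u}$ equals the push-forward of $\cF_{\cA}$, which is $u\cdot\cF_{\cA}$. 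The bookkeeping with the possibly non-based diffeomorphism $f_u$ of $X$ is of exactly the kind carried out in \cite[Appendix D]{Lazaroiu:2016iav} and in the proof of Theorem~\ref{thm:DQSugra}.

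Granting this, the corollary follows quickly. Let $\Phi=(\pi,\cH,\cG,\Delta(P_{\frt}),\cJ)$ be the scalar-electromagnetic bundle underlying $\bzeta$, so that $\bzeta_u$ has underlying scalar-electromagnetic bundle $\Phi_u$. Comparing Definition~\ref{def:confdqsugra} with equations \eqref{eq:GlobalEinstein}, \eqref{eq:GlobalScalar} and \eqref{eq:GlobalMaxwell}, one sees that $(g,s,\cA)\in\cSol(\bzeta)$ if and only if $(g,s,\cF_{\cA})\in\Sol(\Phi)$, since the former system is the latter with $\cF=\cF_{\cA}$. By Theorem~\ref{thm:equivsolutions}, $\mathbb{A}_u$ carries $\Sol(\Phi)$ bijectively onto $\Sol(\Phi_u)$, sending $(g,s,\cF_{\cA})$ to $(g,f_u(s),u\cdot\cF_{\cA})=(g,f_u(s),\cF_{\cA_u})$ by the curvature identity; running the comparison of equations backwards for $\bzeta_u$ then shows $(g,f_u(s),\cA_u)\in\cSol(\bzeta_u)$. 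Thus $\mathbb{A}_u$ restricts to $\cSol(\bzeta)\to\cSol(\bzeta_u)$, and the same argument applied to $u^{-1}$ and $\bzeta_u$ yields bijectivity. Finally, if $u\in\U(\bzeta)$ then $\cH_u=\cH$, $\cG_u=\cG$ and $\cJ_u=\cJ$, hence $\bzeta_u=\bzeta$ and $\mathbb{A}_u$ preserves $\cSol(\bzeta)$.

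The only real work is the curvature identity $\cF_{\cA_u}=u\cdot\cF_{\cA}$: unwinding the layered pull-backs and push-forwards through the unbased automorphism $f_u$ — first on $X$, then restricted over $s$ and over $f_u(s)$ — requires care, even though it runs entirely parallel to the computations already performed in \cite[Appendix D]{Lazaroiu:2016iav}. Everything else — the bijectivity at the level of configuration spaces and the reduction through Theorem~\ref{thm:equivsolutions} — is then purely formal.
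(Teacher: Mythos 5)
Your proposal is correct and follows essentially the same route as the paper: the paper's proof likewise reduces the statement to Theorem~\ref{thm:equivsolutions} via the single identity $\cF_{\cA_u}=u\cdot\cF_{\cA}$, which it simply asserts from the naturality of the adjoint curvature. Your additional bookkeeping (bijectivity on configuration spaces via $\mathbb{A}_{u^{-1}}$, and the explicit comparison of the two systems of equations) fills in details the paper leaves implicit but introduces nothing new in substance.
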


\begin{proof}
The result follows directly from Theorem \ref{thm:equivsolutions} upon 
noticing that:
\begin{equation*}
 \cF_{\cA_{u}} = u\cdot\cF_{\cA}\,~~,
\end{equation*}
which shows that $\cF_{A}$ transforms as the field strength $\cF$
considered in Section \ref{sec:geometricsugra} in the classical formulation 
of the theory.
\end{proof}

\noindent For further reference we introduce the following definition.
\begin{definition}
The gauge \emph{U-duality transformation} induced by $u\in
\U(\bzeta)$ is the bijection $\mathbb{A}_u\colon \Sol(\bzeta) 
\to \Sol(\bzeta)$.
\end{definition}

\noindent We have a canonical morphism of groups:
\begin{equation*}
\frad \colon \U(\bzeta) \to \U(\bPhi(\bzeta))\, , \quad u\mapsto \frad_u\, ,
\end{equation*}
where $\bPhi(\bzeta)$ is the integral scalar-electromagnetic bundle
determined by the polarized scalar-Siegel bundle $\bzeta$. This morphism
associates to $u$ the automorphism of the adjoint bundle of $P_{\frt}$ defined 
canonically by the latter.  

\begin{definition}
The \emph{continuous subgroup} of the gauge
U-duality group $\U(\bzeta)$ is:
\begin{equation*}
\mC(\bzeta) \eqdef \ker(\frad)\subset \U(\bzeta)\, .
\end{equation*}
Similarly, $\mC_o(\bzeta) \eqdef \ker(\frad\vert_{\U_o(\bzeta)}) \subset \U_o(\bzeta)$.
\end{definition}

\noindent The classical U-duality group was shown to be a
finite-dimensional Lie group in Section
\ref{sec:classicalglobalautgroup} when the scalar bundle is flat. This
is no longer true for the gauge U-duality group. Instead, if the rank 
of $P_{\frt}\in\U(\bzeta)$ is positive $\U(\bzeta)$ is an extension of 
the arithmetic duality group $\U(\bPhi(\bzeta))$ by an 
\emph{infinite-dimensional} abelian group, a fact that allows us to 
pinpoint the geometric origin of U-duality.

\begin{prop}
\label{prop:extensionUduality}
The gauge U-duality group $\U_o(\bzeta)$ fits into a short exact sequence:
\ben
\label{Cseq}
1 \to \mC_o (\bzeta)  \hookrightarrow \U_o(\bzeta)\xrightarrow{\frad} \U_o(\bPhi(\bzeta))\to 1\, ,
\een
where $\bPhi(\bzeta)$ is the polarized integral scalar-electromagnetic
bundle determined by $\bzeta$.
\end{prop}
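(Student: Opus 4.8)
The plan is to reduce the statement to the single substantive point, namely surjectivity of the morphism $\frad$, and to deduce that from the classification of Siegel bundles established in \cite{LazaroiuShahbaziAGT,wa} by lifting automorphisms of the adjoint integral duality bundle to automorphisms of $P_{\frt}$. Exactness at $\mC(\bzeta)$ and at $\U(\bzeta)$ requires no real work: $\frad$ is a group morphism and $\mC(\bzeta)\eqdef\ker(\frad)$, so the inclusion $\mC(\bzeta)\hookrightarrow\U(\bzeta)$ is injective with image exactly $\ker(\frad)$. Along the way I would record the geometric meaning of $\mC(\bzeta)$: if $u\in\U(\bzeta)$ satisfies $\frad_u=\id$ then, since $\frad_u$ covers $f_u$, one gets $f_u=\id_X$, so $u$ is a gauge transformation of $P_{\frt}$, i.e. a section of the group bundle $P_{\frt}\times_{\Aff_{\frt}}\Aff_{\frt}$; and $\frad_u=\id$ holds precisely when this section is pointwise in the kernel of the representation of $\Aff_{\frt}$ on $\R^{2n_v}$ defining $\bDelta(P_{\frt})$, which factors through $\Aff_{\frt}\to\Sp_{\frt}(2n_v,\Z)$ and hence has kernel the translation subgroup $\U(1)^{2n_v}\subset\Aff_{\frt}$. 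Thus $\mC(\bzeta)$ is the group of sections of $P_{\frt}\times_{\Aff_{\frt}}\U(1)^{2n_v}$, which is abelian and, for $n_v>0$, infinite-dimensional; the conditions $\cH_u=\cH$, $\cG_u=\cG$, $\cJ_u=\cJ$ are automatic for such $u$.

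For surjectivity, fix $v\in\U(\bPhi(\bzeta))$ covering $f_v\in\Aut_b(\pi)$. I would first observe that it is enough to produce \emph{some} $u\in\Aut_{\pi}(P_{\frt})$ with $f_u=f_v$ and $\frad_u=v$: for such a $u$ the Ehresmann connection $\cH$ and the vertical metric $\cG$ are transported only through $f_u=f_v$, which preserves them because $v\in\U(\bPhi(\bzeta))$, while the taming $\cJ$ is transported through $\frad_u=v$, which also preserves it; hence $u\in\U(\bzeta)$ and $\frad_u=v$. So the problem becomes one of lifting the automorphism $v$ of $\bDelta(P_{\frt})$, regarded as an isomorphism $\bDelta(P_{\frt})\xrightarrow{\sim}f_v^{\ast}\bDelta(P_{\frt})$ of integral duality bundles over $X$, to an isomorphism $P_{\frt}\xrightarrow{\sim}f_v^{\ast}P_{\frt}$ of Siegel bundles.

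Here I would invoke the classification used in the proof of Theorem \ref{thm:DQSugra}: by \cite{LazaroiuShahbaziAGT,wa} a Siegel bundle of type $\frt$ on $X$ is determined, up to isomorphism, by its adjoint integral duality bundle together with its twisted Chern class in $H^2(X,\fS_{\bDelta})$, and this correspondence is functorial, so the sought lift exists exactly when $v$ carries $c(P_{\frt})$ to $f_v^{\ast}c(P_{\frt})$, i.e. when $v$ preserves the twisted Chern class of $P_{\frt}$. \emph{This invariance is the heart of the proof.} The plan for it is to exploit the rigidity forced by $v$ preserving the flat connection $\cD$ and the Dirac lattice $\cL$: presenting $P_{\frt}$ by transition data whose locally constant $\Sp_{\frt}(2n_v,\Z)$-valued part recovers $\bDelta(P_{\frt})$ and whose smooth $\U(1)^{2n_v}$-valued part computes $c(P_{\frt})$, the automorphism $v$ fixes the first part on the nose, and one then checks that it can be completed to an automorphism of the full cocycle, equivalently that $f_v^{\ast}P_{\frt}\cong P_{\frt}$ as Siegel bundles. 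Granting the Chern-class invariance, the reconstruction theorem supplies the lift $u$, the reduction above places $u$ in $\U(\bzeta)$ with $\frad_u=v$, and \eqref{Cseq} is exact. The main obstacle is precisely this Chern-class invariance; everything else amounts to bookkeeping with pushforwards through unbased bundle automorphisms of the sort already handled in \cite[Appendix D]{Lazaroiu:2016iav} and in the proof of Theorem \ref{thm:equivsolutions}.
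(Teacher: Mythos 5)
Your reduction to surjectivity of $\frad$, your observation that the conditions $\cH_u=\cH$, $\cG_u=\cG$, $\cJ_u=\cJ$ hold automatically for \emph{any} lift $u$ of $v$ with $f_u=f_v$ and $\frad_u=v$, and your identification of $\mC(\bzeta)$ with the sections of the torus group bundle $P_{\frt}\times_{\Aff_{\frt}}\U(1)^{2n_v}$ are all correct and consistent with the paper. The problem is the step you yourself flag as the heart of the argument. You reduce the lifting problem to invariance of the twisted Chern class under $v$ and then offer only a plan whose concluding move --- ``one then checks that it can be completed to an automorphism of the full cocycle, equivalently that $f_v^{\ast}P_{\frt}\cong P_{\frt}$ as Siegel bundles'' --- restates the assertion to be proved rather than proving it. In addition, your appeal to functoriality of the classification conceals a second, separate step: the classification of \cite{LazaroiuShahbaziAGT,wa} as used elsewhere in the paper is a statement about isomorphism \emph{classes}, so even granting an isomorphism $P_{\frt}\cong f_v^{\ast}P_{\frt}$ of Siegel bundles, the induced map on adjoint bundles need not equal $v$; one must still correct the chosen isomorphism by a based automorphism, and the existence of that correction is not bookkeeping.

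That correction is precisely where the paper's proof does its work, by a route different from yours. It fixes an \emph{arbitrary} principal-bundle automorphism $u^{\prime}_{\phi}$ of $P_{\frt}$ covering $f_{\phi}$ (asserting $f_{\phi}^{\ast}P_{\frt}\cong P_{\frt}$ with no more justification than you supply, so on that particular point the two arguments are on equal footing), writes the Dirac system as the associated bundle $\cL=P_{\frt}\times_{\ell}\Z^{2n}$, and measures the discrepancy between $\phi$ and the action of $u^{\prime}_{\phi}$ on $\cL$: for each $p$ there is a unique $\cB_p\in\Aff_{\frt}$ with $\phi([p,v])=[u^{\prime}_{\phi}(p),\cB_p(v)]$, the assignment $p\mapsto\cB_p$ is $\Ad$-equivariant, and $u_{\phi}(p):=u^{\prime}_{\phi}(p)\cB_p$ is then an automorphism of $P_{\frt}$ with $\frad(u_{\phi})=\phi$. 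This explicit gauge-theoretic correction replaces your cohomological ``reconstruction'' step and is absent from your proposal; conversely, the Chern-class invariance you isolate is exactly the obstruction that both your argument and the paper's leave implicit when asserting that $f_v$ admits some lift to $P_{\frt}$. To complete your proof you would need to (i) actually establish that $v$ preserves $c(P_{\frt})$, and (ii) either upgrade the classification to a statement about morphisms or import the paper's $\cB$-correction to adjust the resulting isomorphism so that it induces $v$.
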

 
\begin{proof}
Since it is clear that the natural inclusion $\mC(\bzeta)  \hookrightarrow \U(\bzeta)$ is
injective and the map $\frad$ is a homomorphism, it suffices to prove that $\frad$ is 
surjective. Write the Dirac system $\cL$ in $\bzeta$ as an associated bundle 
$\cL = P_{\frt}\times_\ell \mathbb{Z}^{2n}$ to $P_{\frt}$ through the natural representation 
$\ell$ of $\Aff_\frt$ on $\mathbb{Z}^{2n}$. Let $\phi\in \U(\bPhi(\bzeta))$ be an automorphism 
of the scalar-electromagnetic bundle $\bPhi(\bzeta)$ associated to $\bzeta$ and covering the 
diffeomorphism $f_{\phi}\in \Diff(M)$. Since the latter is a diffeomorphism isotopic to the identity, 
the principal bundles $P_{\frt}$ and $P_{\frt}^{f_{\phi}}$, where the latter denotes the pull-back of 
$P_{\frt}$ by $f_{\phi}$, are isomorphic. Fix such an isomorphism, which is equivalent to 
fixing an automorphism $u^{\prime}_{\phi}\colon P_{\frt}\to P_{\frt}$ covering $f_{\phi}$. Then, 
for every $[p,v] \in \cL$ there exists a unique map $\cB_p\in \mathbb{Z}^{2n} \to \mathbb{Z}^{2n}$ 
such that:
\begin{equation*}
\phi([p,v]) = [u^{\prime}_{\phi}(p), \cB_p(v)]\, . 
\end{equation*}
Since $\phi$ is a linear automorphism of the integral duality structure determined by $\bzeta$
it follows that the map $\cB_p\in \Z^{2n} \to \Z^{2n}$ is a linear automorphism of the standard 
symplectic lattice of type $\frt\in \Z$ and therefore belongs to $\Aff_{\frt}$. Furthermore, 
independence of the representative in $[p,v]\in \cL$ in the definition of $\cB_p$ implies:
\begin{equation*}
\cB_{px} = x^{-1} \circ \cB_p \circ x\, ,
\end{equation*}
for every $x\in \Aff_{\frt}$. Hence, the assignment $p\mapsto \cB_p$ defines a smooth
map $\cB\colon P_{\frt} \to \Aff_{\frt}$ which is equivariant with respect to the 
adjoint action. Therefore, we have:
\begin{equation*}
\phi([p,v]) = [u^{\prime}_{\phi}(p)\cB_p , (v)]\, ,
\end{equation*}
and the automorphism $u_{\phi} \colon P_{\frt}\to P_{\frt}$ defined as follows:
\begin{equation*}
u_{\phi}(p) \eqdef u^{\prime}_{\phi}(p)\cB_p\, , \quad p\in P_{\frt}\, ,
\end{equation*}
covers $f_{\phi}\in \Diff(M)$ and satisfies $\frad(u_{\phi}) = \phi$ by
construction. Hence $\frad$ is surjective and thus we conclude.
\end{proof}

\noindent It is clear that $\frad_u$ is trivial when $u\in
\mC(\bzeta)$. Intuitively speaking, elements in $\mC(\bzeta)$
behave as gauge transformations on a principal torus bundle and
therefore act trivially on the curvature of any connection. In fact,
the arithmetic U-duality group $\U(\bPhi(\bzeta))$ identifies with
the \emph{discrete remnant} (in the sense of \cite{wa}) of the gauge 
group $\Aut(P_{\frt})$, which shows that U-dualities in supergravity
are but gauge transformations of the underlying Siegel bundle, a fact 
that elucidates their geometric origin. We discuss next a few examples. 
An in-depth study of the gauge U-duality group will be presented in a 
separate publication.

% % % % % % % % % % % % % % % % % % % % % % % % % % % % % % % % % % % % % % 

\subsection{Rank-zero Siegel bundle}

% % % % % % % % % % % % % % % % % % % % % % % % % % % % % % % % % % % % % % 

Let $(\pi,\cH,\cG)$ be a scalar bundle over $M$ with submersion
$\pi:X\rightarrow M$ and consider the rank zero Siegel bundle
$P_0=(\id_X:X\rightarrow X)$ on $X$ (which is necessarily trivial).
In this case $\Aff_{\frt}$ is the trivial group and we have:
\begin{equation*}
\Aut(P_0) = \Diff(X)\, , \qquad \Aut_{\pi}(P_0) = \Aut_b(\pi)\, , \qquad \Aut_{b}(P_0) =\{\id_X\}~~,
\end{equation*} 
as well as:
\be
\bDelta(P_0) = X\times \left\{ 0\right\}\, .
\ee
Let $\bzeta_0=(\pi,\cH,\cG,P_0,\cJ_0)$, where $\cJ_0\eqdef \id_{\bDelta(P_0)}$. 
Then: 
\begin{equation*}
\U(\bzeta_0) = \Aut^0_b(\pi,\cH,\cG) = \left\{ u\in \Aut_{b}(\pi)\,\,\vert\,\, \cH_{u}=\cH \, , \, \, \cG_{u}=\cG \right\}\, .
\end{equation*} 
Lemma \ref{lemma:Holfinite} shows that $\U(\bzeta_0)$ is isomorphic to
the commmutant of the holonomy group of $\cH$ inside the isometry group 
of the typical fiber of $(\cM,\cG)$ of $(\pi,\cG)$. When the holonomy of $\cH$
is trivial, $\U(\bzeta_0)$ reduces to the orientation-preserving isometry group 
$\Iso(\cM,\cG)$ of the scalar manifold but is in general different. In particular, 
when the holonomy of $\cH$ is full, that is, equal to $\Iso(\cM,\cG)$, then 
$\U(\bzeta_0)$ is isomorphic to the center of $\Iso(\cM,\cG)$ and hence possibly
trivial. This gives a simple and explicit example illustrating how the supergravity 
duality group may differ from  its local counterpart considered in the literature, 
which in this case would correspond always with $\Iso(\cM,\cG)$. 

% % % % % % % % % % % % % % % % % % % % % % % % % % % % % % % % % % % % % % 

\subsection{Rank zero scalar bundle}

% % % % % % % % % % % % % % % % % % % % % % % % % % % % % % % % % % % % % % 

Let $(\pi,\cH,\cG)$ be the rank zero scalar bundle, that is, $X = M$, $\pi 
\colon M\to M$ is the identity map, $\cH = TM$ is canonically identified with the tangent
bundle of $M$ and $\cG$ is the trivial metric on the rank zero vector bundle
over $M$. Then, the isometry group of the typical fiber of $(\pi,\cH,\cG)$ is
the trivial group whence the short exact sequence: 
\begin{equation*}
1 \to \Aut_b(P_{\frt},\cJ)\to \U(\bzeta)\to \Aut_b^0(\pi,\cH,\cG) \to 1\, ,
\end{equation*}
reduces to an isomorphism of groups $\U(\bzeta) = \Aut_b(P_{\frt},\cJ)$ where
$(P_{\frt},\cJ)$ is a polarized Siegel bundle over $M$. Therefore, the gauge U-duality
group reduces to the gauge group of the Siegel bundle underlying the given bosonic
supergravity. This corresponds, in fact, with the electromagnetic gauge duality
group of the abelian gauge theory determined by $(P_{\frt},\cJ)$ as explained in 
detail in \cite{LazaroiuShahbaziAGT}.

% % % % % % % % % % % % % % % % % % % % % % % % % % % % % % % % % % %

\subsection{Holonomy-trivial scalar bundle}

% % % % % % % % % % % % % % % % % % % % % % % % % % % % % % % % % % %

Consider a holonomy-trivial scalar bundle $(\pi,\cH,\cG)$ in the presentation:
\begin{equation*}
X = M \times \cM \, , \qquad \cH=TM^{\mathrm{pr_1}}\, ,
\end{equation*}
where $\cM$ is an oriented $n_s$-dimensional manifold and
$\mathrm{pr}_1\colon M\times \cM\to M$ is the canonical projection
onto the first factor. In this situation $\cV = T\cM^{\mathrm{pr_2}}$,
where $\mathrm{pr}_2\colon M\times \cM\to \cM$ is the canonical
projection onto the second factor, and the vertical metric $\cG$
descends to a Riemannian metric on $\cM$ which we denote by the same
symbol for ease of notation. Furthermore, consider the vertically
polarized Siegel bundle $(P_{\frt},\cJ)$ obtained by pull-back through
$\mathrm{pr}_2\colon M\times \cM\to \cM$ of a vertically polarized
Siegel bundle on $(\cM,\cG)$, which we denote again by
$(P_{\frt},\cJ)$ for ease of notation. Then:
\begin{equation*}
\Aut_b(\pi) = \Diff(\cM)\, , 
\end{equation*}
where $\Diff(\cM)$ the group of oriented diffeomorphisms of $\cM$. In
particular, we obtain the following short exact sequence:
\begin{equation*}
1 \to \Aut_b(P_{\frt}) \to \Aut(P_{\frt}) \to \Diff_0(\cM)\to 1\, ,
\end{equation*}
where $ \Diff_0(\cM)$ denotes the subgroup of $\Diff(\cM)$ that can be
covered by elements in $\Aut(P_{\frt})$. Here $P_{\frt}$ is considered
as a Siegel bundle over $\cM$. In this case, the gauge U-duality group 
is:
\begin{equation*}
\U(\bzeta) =\left\{ u\in \Aut(P_{\frt})\,\,\vert\,\,   \cG_{u} =\cG \, , \, \, \cJ_{u} = \cJ \right\}\, ,
\end{equation*} 
and fits into the short exact sequence:
\begin{equation*}
1 \to \Aut_b(P_{\frt},\cJ) \to \U(\bzeta) \to \Iso_0(\cM,\cG)\to 1\, ,
\end{equation*}
where $\Iso_0(\cM,\cG)$ is group of those isometries of $(\cM,\cG)$
which can be covered by elements in $\U(\bzeta)$. Assume 
in addition that $P_{\frt}$ is topologically trivial and write:
\begin{equation*}
P_{\frt} = \cM\times \Aff_\frt = \cM\times \left[\U(1)^{2n_v}\rtimes \Sp_{\frt}(2n_v,\mathbb{Z})\right]\, .
\end{equation*}
in a fixed trivialization. We have:
\begin{eqnarray*}
& \Aut_b(P_{\frt}) = \cC^{\infty}(\cM ,  \U(1)^{2n_v}\rtimes \Sp_{\frt}(2n_v,\mathbb{Z}))\, , \\
& \Aut(P_{\frt}) = \Diff(\cM)\ltimes \cC^{\infty}(\cM , \U(1)^{2n_v}\rtimes \Sp_{\frt}(2n_v,\mathbb{Z}))\, .
\end{eqnarray*}
Since $\Sp_{\frt}(2n,\mathbb{Z})$ is discrete, we find:
\begin{equation*}
\cC^{\infty}(\cM , \U(1)^{2n_v})\rtimes \Sp_{\frt}(2n_v,\mathbb{Z})) = \cC^{\infty}(\cM , \U(1)^{2n_v})\rtimes \Sp_{\frt}(2n_v,\mathbb{Z})
\end{equation*}
as well as:
\begin{equation*}
\U(\bzeta) \eqdef \left\{ (f,u_T,\mathfrak{U})\in \Iso(\cM,\cG)\ltimes (\cC^{\infty}(\cM , \U(1)^{2n_v})\rtimes \Sp_{\frt}(2n_v,\mathbb{Z}))\,\,\vert\,\, \mathfrak{U} \cJ \mathfrak{U}^{-1} = \cJ\circ f \right\}\, .
\end{equation*} 
In particular, the morphism $\frad \colon \U(\bzeta) \to \U(\bPhi(\bzeta))$ is given explicitly by:
\begin{equation*}
\frad((f,u_T,\mathfrak{U})) = (f,\mathfrak{U})\in \Iso(\cM,\cG)\ltimes \Sp_{\frt}(2n_v,\mathbb{Z}) \, ,
\end{equation*}
The short exact sequence:
\begin{equation*}
1 \to \cC^{\infty}(\cM , \U(1)^{2n_v})  \hookrightarrow \U(\bzeta)\xrightarrow{\frad} \U(\bPhi(\bzeta))\to 1\, ,
\end{equation*}
shows how $\mC(\bzeta) = \cC^{\infty}(\cM , \U(1)^{2n_v})$
captures the \emph{non-discrete} gauge transformations in
$\U(\bzeta)$, which act trivially on the adjoint
bundle of $P$. Consequently, we have:
\begin{equation*}
\U(\bPhi(\bzeta)) \eqdef \left\{ (f,\mathfrak{U})\in \Iso(\cM,\cG)\times \Sp_{\frt}(2n_v,\mathbb{Z})\,\,\vert\,\, \mathfrak{U} \cJ \mathfrak{U}^{-1} = \cJ\circ f \right\}\, ,
\end{equation*} 
which illustrates the explicit dependence of the arithmetic U-duality
group on the type $\frt\in \Div^n$ of its underlying Siegel bundle $P_{\frt}$.

% % % % % % % % % % % % % % % % % % % % % % % % % % % % % % % % % % % % % % 
% % % % % % % % % % % % % % % % % % % % % % % % % % % % % % % % % % % % % % 

\appendix

% % % % % % % % % % % % % % % % % % % % % % % % % % % % % % % % % % % % % % 
% % % % % % % % % % % % % % % % % % % % % % % % % % % % % % % % % % % % % % 

% % % % % % % % % % % % % % % % % % % % % % % % % % % % % % % % % % % % % % 
% % % % % % % % % % % % % % % % % % % % % % % % % % % % % % % % % % % % % % 

\end{document}